\theoremstyle{plain}
\newtheorem{theorem}{Theorem}[section]
\newtheorem{lemma}[theorem]{Lemma}
\newtheorem{corollary}[theorem]{Corollary}
\newtheorem{proposition}[theorem]{Proposition}
\theoremstyle{definition}
\theoremstyle{remark}
\renewcommand{\le}{\leqslant}
\renewcommand{\leq}{\leqslant}
\renewcommand{\ge}{\geqslant}
\renewcommand{\geq}{\geqslant}
\newcommand{\R}{\mathbb{R}}
\title[Codimension two and three Kneser Transversals]{Codimension two and three Kneser Transversals}
\thanks{Supported by the ECOS-Nord project M13M01, by CONACyT project 166306, by CONACyT grant 277462 and by PAPIIT-UNAM project IN112614. The project leading to this application has received funding from European Research Council (ERC) under the European Union’s Horizon 2020 research and innovation programme under grant agreement No. 678765}
\thanks{$^{\star}$ Corresponding Author: jonathan.chappelon@umontpellier.fr}
\author[\tiny J.~Chappelon]{J.~Chappelon$^{\, \star}$}
\address{Institut Montpelli\'{e}rain Alexander Grothendieck, CNRS, Univ. Montpellier, France}
\email[Corresponding author]{jonathan.chappelon@umontpellier.fr}
\author[L.~Mart\'{i}nez-Sandoval]{L.~Mart\'{i}nez-Sandoval}
\address{Instituto de Matem\'{a}ticas, Universidad Nacional Aut\'{o}noma de M\'{e}xico, Ciudad Universitaria, M\'{e}xico D.F., 04510, Mexico and Dept. of Computer Science, Faculty of Natural Sciences, Ben-Gurion University of the Negev, Beer Sheva, 84105, Israel}
\email{leomtz@im.unam.mx}
\author[L.~Montejano]{L.~Montejano}
\address{Instituto de Matem\'{a}ticas, Universidad Nacional Aut\'{o}noma de M\'{e}xico, Ciudad Universitaria, M\'{e}xico D.F., 04510, Mexico}
\email{luis@math.unam.mx}
\author[L.P.~Montejano]{L.P.~Montejano}
\address{Institut Montpelli\'{e}rain Alexander Grothendieck, CNRS, Univ. Montpellier, France}
\email{lpmontejano@gmail.com}
\author[J.L.~Ram\'{i}rez Alfons\'{i}n]{J.L.~Ram\'{i}rez Alfons\'{i}n}
\address{Institut Montpelli\'{e}rain Alexander Grothendieck, CNRS, Univ. Montpellier, France}
\email{jorge.ramirez-alfonsin@umontpellier.fr}
\subjclass[2010]{52A35, 52C40, 52B55, 68-04}
\keywords{transversals, oriented matroids, cyclic polytope}
\date{November 14, 2017}
\begin{document}
%%%%%%%%%%%%%%%%%%%%%%%%%%%%%%%%%%%%%%%%%%%%%%%%%%%%%%%%%%%%
%%%%%%%%%%%%%%%%%%%%%%%%%%%%%%%%%%%%%%%%%%%%%%%%%%%%%%%%%%%%
\begin{abstract}
Let $k,d,\lambda \geqslant 1$ be integers with $d\geqslant \lambda $ and let $X$ be a finite set of points in $\mathbb{R}^{d}$. 
A $(d-\lambda)$-plane $L$ transversal to the convex hulls of all $k$-sets of $X$ is called \emph{Kneser transversal}. If in addition $L$ contains $(d-\lambda)+1$ points of $X$, then $L$ is  called \emph{complete Kneser} transversal.
\smallskip

In this paper, we present various results on the existence of (complete) Kneser transversals for $\lambda =2,3$.  In order to do this, we introduce the notions of {\em stability} and {\em instability} for (complete) Kneser transversals.
We first give a stability result for collections of $d+2(k-\lambda)$ points in $\mathbb{R}^d$ with $k-\lambda\geqslant 2$ and $\lambda =2,3$. We then present a description of Kneser transversals $L$ of collections of $d+2(k-\lambda)$ points in $\mathbb{R}^d$ with $k-\lambda\geqslant 2$ for $\lambda =2,3$. We show that either $L$ is a complete Kneser transversal or it contains $d-2(\lambda-1)$ points and the remaining $2(k-1)$ points of $X$ are matched in $k-1$ pairs in such a way that $L$ intersects the corresponding closed segments determined by them. The latter leads to new upper and lower bounds (in the case when $\lambda =2$ and $3$)  for
$m(k,d,\lambda)$ defined as the maximum positive integer $n$ such that every set of $n$ points (not necessarily in general position) in $\mathbb{R}^{d}$ admit a Kneser transversal.
\smallskip

Finally, by using oriented matroid machinery, we present some computational results (closely related to the stability and unstability notions). We determine the existence of (complete) Kneser transversals for each of the $246$ different order types of configurations of $7$ points in $\mathbb{R}^3$.
\end{abstract}
%%%%%%%%%%%%%%%%%%%%%%%%%%%%%%%%%%%%%%%%%%%%%%%%%%%%%%%%%%%%
\maketitle
%%%%%%%%%%%%%%%%%%%%%%%%%%%%%%%%%%%%%%%%%%%%%%%%%%%%%%%%%%%%
%%%%%%%%%%%%%%%%%%%%%%%%%%%%%%%%%%%%%%%%%%%%%%%%%%%%%%%%%%%%
\section{Introduction}
%%%%%%%%%%%%%%%%%%%%%%%%%%%%%%%%%%%%%%%%%%%%%%%%%%%%%%%%%%%%
%%%%%%%%%%%%%%%%%%%%%%%%%%%%%%%%%%%%%%%%%%%%%%%%%%%%%%%%%%%%
Let $k,d,\lambda \geq 1$ be integers with $d\geq \lambda $ and let $X\subset\R^{d}$ be a finite set. A $(d-\lambda)$-plane $L$ transversal to the convex hulls of all $k$-sets of $X$ is called \emph{Kneser transversal}. If in addition $L$ contains $(d-\lambda)+1$ points of $X$, then $L$ is  called \emph{complete Kneser} transversal.
\smallskip

Let $m(k,d,\lambda)$ be the maximum positive integer $n$ such that every set of $n$ points (not necessarily in general position) in $\mathbb{R}^{d}$ admits a Kneser Transversal.
\smallskip

This function was introduced in \cite{ABMR} where it was proved that

\begin{equation}\label{ineq1}
 d-\lambda +k+\left\lceil \frac{k}{\lambda }\right\rceil -1 \le m(k,d,\lambda )< d+2(k-\lambda)+1.
\end{equation}

The proof of the lower bound follows the same spirit of Dol'nikov \cite{D} and uses Schubert calculus in the cohomology ring of Grassmannian manifolds. The value of $m(k,d,\lambda )$ is strongly connected with  Rado's centerpoint theorem \cite{Rado}.  Indeed, let $n,d,\lambda \geq 1$ be integers with $d  \geq \lambda $ and let

\smallskip
\noindent {\bf $\tau(n,d,\lambda )$}$\overset{\mathrm{def}}{=}$ the maximum positive integer $\tau$ such that for any collection $X$ of $n$ points in $\mathbb{R}^{d}$, there is a  $(d-\lambda )$-plane $L_X$ such that any closed half-space $H$ through $L_X$ contains at least $\tau$ points.
\smallskip

We thus have that $n-\tau(n,d,\lambda)+1$ is equal to the minimum positive integer $k$ such that for any collection $X$ of $n$ points in $\mathbb{R}^{d}$  there is a common transversal $(d-\lambda )$-plane to the convex hulls of all $k$-sets { which turns out to be smaller than or equal to $\lfloor\frac{\lambda(n-d+\lambda)}{\lambda+1}\rfloor+1$. The latter was proved in \cite[Theorem 2]{ABMR} by using the lower bound of $m(k,d,\lambda)$ given in \eqref{ineq1} and therefore any improvement to the bounds for $m(k,d,\lambda)$ might shed light on the behaviour of $\tau(n,d,\lambda )$.}
\medskip

The case when $\lambda=1$ is of particular interest. In \cite{ABMR} it was proved that  $m(k,d,1)=d+2k-2$ and it was showed that this equality is equivalent to the fact that the chromatic number of the {\em Kneser graph} $KG(n,k)$ is $n-2k+2$, the well-known Kneser's conjecture originally proved by Lov\'asz \cite{L}.
\medskip

One of the purposes of this paper is to improve upper and lower bounds for $m(k,d,2)$ and $m(k,d,3)$, when $d$ is not too large. 
\smallskip

From the inequalities in \eqref{ineq1} it can be deduced that

\begin{equation}\label{eq2}
m(k,d,\lambda ) = d-\lambda +k+\left\lceil \frac{k}{\lambda }\right\rceil -1 \text{ for } \lambda=1, k-\lambda\leq 1 \text{ and } k\leq 3.
\end{equation}

Furthermore, the equality also holds for $d=\lambda$ \cite[Theorem 6]{ABMR}. In this paper, we shall focus our attention on the case when $d >\lambda \geq 2$  and  $k-\lambda\geq 2$.
\medskip

In \cite{CMMMR},  we introduced and studied a natural {\em discrete} version of the function $m(k,d,\lambda)$ in which it is asked the existence of complete Kneser transversals (the corresponding function is denoted by $m^*(k,d,\lambda)$). It turns out that the existence of a Kneser transversal is not necessarily an invariant of the {\em order type}. For example, for $d=2$, let $X$ be the vertex set of a regular hexagon. Then, the center of such hexagon is a $0$-plane transversal to the convex hull of the $4$-sets. But, by a suitable 'slight' perturbation of the $6$ vertices of the hexagon we lose this property, see Figure \ref{figura1}.
 \begin{figure}[htb]
 \includegraphics[width=.5\textwidth]{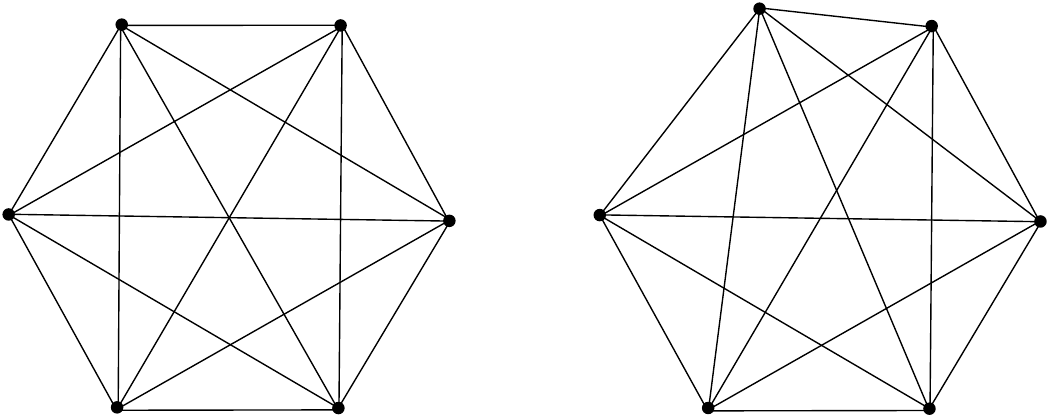}
 \caption{} \label{figura1}
\end{figure}
The situation for complete Kneser transversals is different. Indeed, in \cite[Section 2]{CMMMR} we showed how to detect complete Kneser transversal by using only Radon partitions implying that the existence of such transversals
 is an invariant of the order type. This naturally lead us to consider the notions of {\em stability} and {\em instability}. A Kneser transversal is said to be {\em stable} (resp. {\em unstable}) if the given set of points can be {\em slightly perturbed} (move each point to, not more than $\epsilon>0$ distance of their original position) such that the new configuration of points admits (if there is any) only complete Kneser transversals (resp. the new configuration of points does not admit a Kneser transversal).
\medskip

In the next section, we give a stability result for collections of $d+2(k-\lambda)$ points in $\R^d$ with $k-\lambda\ge 2$ for $\lambda =2,3$ (Theorem~\ref{stability}). In order to do this, we present a description of Kneser transversals $L$ for collections of $d+2(k-\lambda)$ points in $\R^d$ with $k-\lambda\ge 2$ and $\lambda =2,3$. We show that either $L$ is a complete Kneser transversal or it contains $d-2(\lambda-1)$ points and the remaining $2(k-1)$ points have a nice geometric property (Theorem~\ref{th:kt}).
\medskip

In Section~\ref{sec:bound}, we give an upper bound (Theorem~\ref{th:bound}) when $\lambda=2,3$, $(k-\lambda)\geq 2$ and $2(\lambda-1)\leq d\leq 2(k-2)$. Also, by using results due to Bukh, Matou\v sek and Nivasch \cite{BMN}, we obtain a lower bound for $m(k,d,2)$ (see Equation~\eqref{eq3}), which for $d\geq3, \lambda=2$ and $k$ large enough is better than the lower bound given in (1). 

\medskip

Finally, in Section~\ref{computer}, we present some computational results concerning  the existence of (complete) Kneser transversal lines to tetrahedra in configurations of $7$ points in $\mathbb{R}^{3}$. This provides a further insight on the relation between Kneser transversals and complete Kneser transversals. This is done by using oriented matroid machinery. We show that complete Kneser transversal lines to the tetrahedra can be completely determined by the oriented matroid associated to the corresponding configuration of 7 points. We then consider oriented matroids associated to some {\em extended} configurations of 8 points to detect the existence of non-complete Kneser transversals lines to tetrahedra (if any). This provides a method to determine the existence of Kneser transversals of configurations of 7 points in $\R^3$. We use these results to determine the existence of (complete) Kneser transversals for each of the 246 different order types of configurations of 7 points in $\R^3$ (Theorems \ref{th:comp1} and \ref{th:comp2}).

%%%%%%%%%%%%%%%%%%%%%%%%%%%%%%%%%%%%%%%%%%%%%%%%%%%%%%%%%%%%
%%%%%%%%%%%%%%%%%%%%%%%%%%%%%%%%%%%%%%%%%%%%%%%%%%%%%%%%%%%%
\section{Stability}
%%%%%%%%%%%%%%%%%%%%%%%%%%%%%%%%%%%%%%%%%%%%%%%%%%%%%%%%%%%%
%%%%%%%%%%%%%%%%%%%%%%%%%%%%%%%%%%%%%%%%%%%%%%%%%%%%%%%%%%%%

Our main result in this section is the following theorem.

\begin{theorem}\label{stability}
Let  $X=\{x_1,\dots ,x_n\}$ be a finite collection of points in $\R^{d}$. Suppose that $n=d + 2(k-\lambda)$, $k-\lambda \geq 2$, $\lambda=2,3$ and $d\geq \lambda$. For every $\epsilon >0$  there exists $X^\prime=\{x^\prime_1,\dots ,x^\prime_n\}$, a collection of points in $\R^{d}$  in general position such that $| x_i - x^\prime _i| < \epsilon$, for every $i=1,\dots ,n$, and with the property that every transversal $(d-\lambda)$-plane to the convex hull of the $k$-sets of $X^\prime$ is complete (i.e.,  it contains  $d-\lambda +1$ points of $X^\prime)$.
\end{theorem}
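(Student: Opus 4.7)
The plan is to deduce Theorem~\ref{stability} from the structure theorem (Theorem~\ref{th:kt}) by a generic parameter count. Under the hypotheses, Theorem~\ref{th:kt} asserts that every transversal $(d-\lambda)$-plane $L$ to the convex hulls of the $k$-sets of an $n$-point configuration in general position (with $n=d+2(k-\lambda)$) is either complete, or else contains exactly $d-2(\lambda-1)$ of the points and meets the $k-1$ closed segments determined by a perfect matching of the remaining $2(k-1)$ points. Hence it suffices to exhibit, arbitrarily close to $X$, a configuration $X'$ in general position for which no transversal of this non-complete type exists.

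The combinatorial type of a putative non-complete transversal is encoded by a pair $(S,M)$, where $S\subseteq\{1,\dots,n\}$ has size $d-2(\lambda-1)$ (the anchors) and $M$ is a perfect matching on the remaining $2(k-1)$ indices; only finitely many such pairs occur. For each one I would consider the semi-algebraic incidence variety
\[
\mathcal{I}_{S,M}=\{(Y,L)\in\R^{nd}\times\mathrm{AffGr}(d-\lambda,d)\,:\,L\supseteq\{y_i\}_{i\in S},\ [y_i,y_j]\cap L\neq\emptyset\ \text{for all}\ \{i,j\}\in M\},
\]
and let $V_{S,M}\subseteq\R^{nd}$ be its projection to the configuration space. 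The goal is to show that each $V_{S,M}$ has positive codimension in $\R^{nd}$; then a finite union of nowhere-dense semi-algebraic sets, together with the proper algebraic non-general-position locus, cannot cover any $\epsilon$-ball around $X$, and the required $X'$ exists.

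The dimension count uses the forgetful map $\mathcal{I}_{S,M}\to\mathrm{AffGr}(d-\lambda,d)$, whose base has dimension $\lambda(d-\lambda+1)$. For fixed $L$, each of the $d-2(\lambda-1)$ anchors is a free point of $L$, contributing $d-\lambda$ parameters; each matched pair $(y_i,y_j)$ satisfies the condition ``$[y_i,y_j]$ meets $L$,'' which, after passing to the quotient $\R^d/\vec L\cong\R^\lambda$, becomes the requirement that the images $\bar y_i,\bar y_j$ lie on a common line through the origin with the origin between them, a codimension-$(\lambda-1)$ semi-algebraic condition on $\R^{2d}$. Summing,
\[
\dim\mathcal{I}_{S,M}=\lambda(d-\lambda+1)+(d-2\lambda+2)(d-\lambda)+(k-1)(2d-\lambda+1)=nd-(\lambda-1)(k-\lambda-1).
\]
Under the hypotheses $\lambda\in\{2,3\}$ and $k-\lambda\geq2$, the defect $(\lambda-1)(k-\lambda-1)\geq 1$, so $V_{S,M}$ has codimension at least $1$ in $\R^{nd}$.

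Once this codimension bound is secured the theorem is immediate: the complement of $\bigcup_{(S,M)}V_{S,M}$, intersected with the open dense set of general-position configurations, is dense in $\R^{nd}$, so admissible perturbations $X'$ exist within any $\epsilon$-ball around $X$. The main step to verify carefully is the codimension-$(\lambda-1)$ estimate for ``segment meets $L$''; the degenerate strata, where a projected endpoint vanishes or the two projections become simultaneously collinear in further-degenerate ways, live in strictly higher codimension and therefore do not inflate the count. Note that the dimensional defect degenerates to $0$ precisely when either $\lambda=1$ or $k-\lambda\leq 1$, which matches exactly the cases excluded from the hypothesis.
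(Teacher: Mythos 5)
Your argument is correct, and it reaches the conclusion by a genuinely different route than the paper. Both proofs reduce, via Theorem~\ref{th:kt}, to destroying the finitely many combinatorial types of non-complete transversals (an anchor set $S$ of size $d-2(\lambda-1)$ plus a perfect matching $M$ of the remaining $2(k-1)$ points), but from there the paper argues combinatorially and iteratively: it first proves a uniqueness lemma (Lemma~\ref{lem3}, via successive orthogonal projections along the matched directions) showing that two non-complete transversals of the same type must coincide, hence that there are only finitely many non-complete transversals; it then perturbs a \emph{single} endpoint of one matched pair so that the (unique) transversal of that type no longer meets the corresponding line, and iterates, strictly decreasing the finite count of non-complete transversals at each step. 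You instead make one global transversality argument: the locus $V_{S,M}$ of configurations admitting the incidence pattern $(S,M)$ is the projection of a semi-algebraic incidence variety fibered over $\mathrm{AffGr}(d-\lambda,d)$, and your dimension count $nd-(\lambda-1)(k-\lambda-1)$ is correct (I verified the algebra, including the codimension-$(\lambda-1)$ estimate for ``the segment meets $L$'' after quotienting by the direction space of $L$, and the fact that the degenerate strata with a projected endpoint at the origin have codimension $\lambda>\lambda-1$). Your approach buys a cleaner, non-iterative proof that dispenses with Lemma~\ref{lem3} entirely and makes visible exactly why the hypotheses matter --- the defect $(\lambda-1)(k-\lambda-1)$ vanishes precisely when $\lambda=1$ or $k-\lambda\leq 1$ --- at the cost of invoking dimension theory of semi-algebraic sets (Tarski--Seidenberg and the fact that projections do not raise dimension, so that a finite union of positive-codimension semi-algebraic sets is nowhere dense). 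The paper's route is more elementary and constructive, and its Lemma~\ref{lem3} is reused elsewhere; but as a proof of Theorem~\ref{stability} alone, yours is complete and arguably more transparent.
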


In order to prove Theorem \ref{stability}, we first need some results.

\begin{lemma}\label{lem1}
Suppose $Z=\{z_0,z_1,\dots,z_m\}\subset  \mathbb{R}^{m}$ is a collection of $m+1$ points in general position and $\ell$ is a codimension two transversal plane to the convex hull of all the $3$-sets of $Z$,  $m\geq3$.  Then  either $|\ell\cap Z|\geq m-2$ or  $|\ell\cap Z|= m-3$ and $\ell$ is transversal to two opposite edges of the tetrahedron with vertices in $Z-\ell$.  
\end{lemma}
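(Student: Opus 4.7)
My plan is to project along $\ell$ and reduce to a problem in $\R^2$. Pick a basepoint on $\ell$ and let $\pi\colon\R^m\to\R^2$ be the affine quotient by $\ell$; set $y_i=\pi(z_i)$, so that $y_i=0$ if and only if $z_i\in\ell$. The transversality statement $\ell\cap\conv\{z_i,z_j,z_k\}\neq\emptyset$ translates into $0\in\conv\{y_i,y_j,y_k\}$, which is automatic whenever one of the three is already in $\ell$. Writing $r=|\ell\cap Z|$ and $s=m+1-r$, the substantive hypothesis is that $0\in\conv\{y_i,y_j,y_k\}$ for every triple among the $s$ nonzero projections. The desired conclusion is that $s\le 4$, together with the antipodal-pair structure when $s=4$.

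For the case $s=4$ I would use an angular parameterization. Order the four nonzero $y_i$ by their angles on $S^1$ and let $a_1,a_2,a_3,a_4$ denote the consecutive arc lengths, so $a_1+a_2+a_3+a_4=2\pi$. The statement ``$0\in\conv$ of three planar points'' is equivalent to those three points not lying in a common open half-plane, which in angular terms says that no arc between consecutive chosen points exceeds $\pi$. Applied to the four $3$-subsets this produces the cyclic system $a_i+a_{i+1}\le\pi$ (indices mod $4$). Summing gives $4\pi\le 4\pi$, which is tight; every inequality is therefore an equality, forcing $a_1=a_3$ and $a_2=a_4$. Thus the four points split into two antipodal pairs in the projection, which lifts to: the segments $[z_1,z_3]$ and $[z_2,z_4]$ both cross $\ell$, and these are opposite edges of the tetrahedron $\conv(Z\setminus\ell)$.

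For $s\ge 5$ I would derive a contradiction with general position. First, any five of the nonzero $y_i$ must lie on a common line through $0$. Indeed, apply the $s=4$ analysis to $\{y_1,y_2,y_3,y_4\}$ to place them on one or two lines through the origin; the case of two distinct lines is ruled out by examining a second $4$-subset such as $\{y_1,y_2,y_4,y_5\}$, where each of the three possible antipodal pairings either requires antipodality across the two lines (impossible) or forces $y_4,y_5$ to be antipodal while they lie on the same ray. Hence $y_1,\dots,y_4$ sit on one line $L$ through $0$, and the same $4$-subset analysis on $\{y_1,y_2,y_3,y_5\}$ places $y_5$ on $L$. Propagating via a $5$-subset containing four distinct points of $L$ puts every remaining nonzero $y_i$ on $L$. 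But then $\pi^{-1}(L)\supset\ell$ is an $(m-1)$-dimensional affine subspace of $\R^m$ containing all $r+s=m+1$ points of $Z$, contradicting the affine independence supplied by general position.

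The main obstacle is the $5$-point step: one must carefully walk through the three possible antipodal pairings in each $4$-subset and verify that the consistency requirements collapse all five directions onto a single line through the origin. The other ingredients — the arc-length inequalities, the equivalence between projected antipodality and transversality of $\ell$ to a segment, and the final dimension count against general position — are routine.
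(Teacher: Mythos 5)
Your proof is correct, but it takes a genuinely different route from the paper. The paper argues by induction on $m$: the base case $m=3$ is the classical fact about a line meeting all four faces of a tetrahedron; for $m=4$ it slices $\ell$ with the hyperplane spanned by four of the points, applies the base case to that slice, and derives a contradiction from the four triangles through the fifth point; the general case is then dispatched as an ``easy induction.'' You instead quotient $\R^m$ by $\ell$ to land in $\R^2$, where the hypothesis becomes ``$0\in\conv$ of every triple of nonzero projections,'' and you finish with a single arc-length averaging argument for four points and a direction-collapsing argument plus a dimension count for five or more. Your approach buys uniformity in $m$ (no induction, no slicing) and makes the opposite-edges structure completely transparent as forced equality in the chain $a_i+a_{i+1}\le\pi$; the paper's approach is shorter to state but leaves the inductive step entirely to the reader. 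Two details in your sketch deserve tightening. First, in the two-distinct-lines subcase your stated justification via the single $4$-subset $\{y_1,y_2,y_4,y_5\}$ does not by itself force $y_4$ and $y_5$ onto the same ray; you need to play two $4$-subsets against each other (e.g.\ $\{y_1,y_2,y_3,y_5\}$ forces $y_5$ to point in the direction of $y_4$, while $\{y_1,y_2,y_4,y_5\}$ forces it to point oppositely), which does yield the contradiction. Second, the angular parameterization tacitly assumes the four directions are distinct; coincident directions make some $a_i=0$, but the equality chain then puts all four points on one line through the origin, which either still yields an antipodal matching or is excluded by the same general-position dimension count you use at the end. Incidentally, once all nonzero projections lie on a single line, three of them share a ray and already violate the triple condition, so the final appeal to general position for $s\ge 5$ can be avoided altogether.
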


\begin{proof}
If $m=3$ and $\ell$ is a transversal line  to the convex hull of all the $3$-sets of $Z=\{z_0,z_1,z_2,z_3\}$, then either $|\ell\cap Z|\geq 1$ or if $\ell\cap Z = \emptyset$, then  $\ell$ is transversal to two opposite edges of the tetrahedron with vertices $\{z_0,z_1,z_2,z_3\}$.

Let us prove now that if  $m=4$ and $\ell$ is a transversal $2$-plane to the convex hull of all the $3$-sets of $Z$ then $\ell\cap Z \not= \emptyset$. Suppose that $\ell\cap Z = \emptyset$ and without loss of generality assume that the intersection of $\ell$ with the $3$-plane $H$ generated by $Z-\{z_0\}$ is a line. Then, by the above $\ell\cap H$ is a line transversal to two opposite edges of the tetrahedron with vertices in $\{z_1,z_2,z_3, z_4\}$, let say $\{z_1,z_2\}$ and $\{z_3, z_4\}$. Note than $\ell$ does not intersect the four edges $\{z_1,z_3\}$, $\{z_3,z_2\}$, $\{z_2,z_4\}$, $\{z_4,z_1\}$, but $\ell$ must intersect the four triangles $\{z_0,z_1,z_3\}$, $\{z_0,z_3,z_2\}$, $\{z_0, z_2,z_4\}$, $\{z_0,z_4,z_1\}$, which is impossible unless $z_0\in\ell$.

Now it is easy to prove by induction that if $m\geq4$ and $\ell$ is a codimension two transversal plane to the convex hull of all the $3$-sets of $Z$ then $\ell\cap Z \not= \emptyset$ and using this to prove again by induction our lemma.
\end{proof}

\begin{lemma}\label{lem3}
Let $X\subset \R^{d}$ be a collection of $d+2$ points in general position, $d\geq2(\lambda-1)$ and $\lambda\geq 1$. Suppose $L_1$ and $L_2$ are $(d-\lambda$)-planes with the following properties:
\begin{enumerate}
\item
$L_1\cap X=L_2\cap X$ consist of $d-2(\lambda-1)$ points,
\item
The points of  $L_i - X$ can be matched in $\lambda$ pairs $\{z_1, z_2\},\dots \{z_{2\lambda-1}, z_{2\lambda}\}$ in such a way that both $L_1$ and $L_2$ intersects the corresponding lines determined by them.
\end{enumerate}
Then $L_1=L_2$.
\end{lemma}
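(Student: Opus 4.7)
The plan is to pass to the quotient by $\aff(A)$, where $A=L_1\cap X=L_2\cap X$, and reduce to a linear-algebraic uniqueness statement in $\R^{2\lambda-1}$ that is governed by the (essentially unique) Radon dependence of $X$.

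First I would fix a point $o\in A$ and consider the quotient map $\pi\colon \R^d\to \R^d/\aff(A)\cong \R^{2\lambda-1}$. Since $\aff(A)\subseteq L_i$, each image $\bar L_i:=\pi(L_i)$ is a $(\lambda-1)$-dimensional linear subspace through the origin, and $L_1=L_2$ is equivalent to $\bar L_1=\bar L_2$. The $2\lambda$ points of $X\setminus A$ project to vectors $\bar z_1,\ldots,\bar z_{2\lambda}\in \R^{2\lambda-1}$, and $L_i$ meets the line through $z_{2j-1},z_{2j}$ if and only if $\bar L_i$ meets the corresponding affine line $\ell_j$ through $\bar z_{2j-1},\bar z_{2j}$. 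The problem is thus to show that the $(\lambda-1)$-dimensional linear subspace $\bar L_i$ of $\R^{2\lambda-1}$ meeting the $\lambda$ prescribed lines is unique.

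Next, I would exploit general position. Since $X$ consists of $d+2$ points in general position, it admits a unique (up to scalar) affine relation, which descends to a unique linear dependence $\sum_{i=1}^{2\lambda}c_i\bar z_i=0$ in the quotient, with all $c_i\neq 0$ (any $2\lambda-1$ of the $\bar z_i$ pull back with $A$ to $d+1$ affinely independent points). Writing the intersection $\bar L_i\cap \ell_j$ as $\bar v_j=(1-t_j)\bar z_{2j-1}+t_j\bar z_{2j}$, the $\lambda$ vectors $\bar v_1,\ldots,\bar v_\lambda$ lie in the $(\lambda-1)$-dimensional space $\bar L_i$ and hence satisfy a nonzero linear dependence $\sum_j\alpha_j\bar v_j=0$. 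Expanding in the $\bar z$ basis produces a dependence on the $\bar z_i$, which by uniqueness must be a scalar multiple of the Radon dependence. Matching coefficients yields $\alpha_j(1-t_j)=\mu c_{2j-1}$ and $\alpha_j t_j=\mu c_{2j}$ for a common $\mu\neq 0$, whence $\alpha_j=\mu(c_{2j-1}+c_{2j})$.

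The main subtlety I anticipate is ensuring that the denominator in the resulting formula $t_j=c_{2j}/(c_{2j-1}+c_{2j})$ is nonzero; but if $c_{2j-1}+c_{2j}=0$ for some $j$, then $\alpha_j=0$ while simultaneously $\mu c_{2j}=\alpha_j t_j=0$, contradicting $\mu,c_{2j}\neq 0$. Hence each $t_j$ is determined by the Radon coefficients of $X$ alone, independently of the choice of $L_i$. So the vectors $\bar v_j$ coincide for $\bar L_1$ and $\bar L_2$; they are nonzero, since $\bar z_{2j-1}$ and $\bar z_{2j}$ are linearly independent by general position; and they span exactly a $(\lambda-1)$-dimensional subspace, because two independent dependences among the $\bar v_j$ would, by the same argument, force two non-proportional dependences among the $\bar z_i$, contradicting uniqueness. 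Therefore $\bar L_1=\mathrm{span}(\bar v_1,\ldots,\bar v_\lambda)=\bar L_2$, and $L_1=L_2$ follows.
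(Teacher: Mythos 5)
Your proof is correct, but it takes a genuinely different route from the paper's. The paper proves Lemma~\ref{lem3} by induction on $\lambda$: it projects orthogonally in the direction $z_2-z_1$ of the first matched pair, notes that the image of $X$ is a set of $d+1$ points in general position in $\R^{d-1}$ and that the images of $L_1,L_2$ are planes of codimension $\lambda-1$ satisfying the same hypotheses with one fewer pair, and descends to the trivial base case of a hyperplane through $d$ points in general position. You instead quotient once by $\aff(L_1\cap X)$ and solve the resulting problem in $\R^{2\lambda-1}$ in closed form: since every $2\lambda-1$ of the projected points are linearly independent, their dependence space is one-dimensional and spanned by the descended Radon dependence with all coefficients nonzero, which forces the intersection parameters to be $t_j=c_{2j}/(c_{2j-1}+c_{2j})$ and forces the points $\bar v_j$ to span a full $(\lambda-1)$-dimensional subspace. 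I checked the details --- the well-definedness of the $\bar v_j$ (neither endpoint of a matched pair lies in $L_i$, by hypothesis (1)), the nonvanishing of $c_{2j-1}+c_{2j}$, and the final dimension count --- and they all hold. Your approach buys an explicit description of the transversal (the ratios $t_j$ depend only on the Radon coefficients of $X$), and it avoids both the induction and the need to verify that general position survives the successive projections; the paper's argument is shorter but leaves those verifications to the reader. One cosmetic remark: for $\lambda=1$ your intermediate claim that the $\bar v_j$ are nonzero fails (in $\R^1$ the two projected endpoints are linearly dependent), but there the conclusion is immediate since the $d$ common points already span the hyperplane, so nothing is lost in the cases $\lambda=2,3$ for which the lemma is used.
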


\begin{proof} The proof is by induction on $\lambda$ projecting orthogonally  in the direction of the line through $\{z_1, z_2\}$. Note that the fact that $L_i$ intersects the line generated by $\{z_{2j-1},z_{2j}\}$, $j=2,\dots,\lambda$ implies that the orthogonal projection of $X$ in the direction of $z_2-z_1$ is a collection of  $d+1$ points in general position and the orthogonal projection of $L_i$ is a $(d-\lambda)$-plane of codimension $\lambda-1$, $i=1,2$.
\end{proof}

The following technical lemma will be crucial for the codimension three case in the main stability result.

\begin{lemma}\label{lem2}
Let $X \subset \mathbb{R}^{3}-\{0\}$  be a finite set of points and let $\Theta$ be a collection of triples of $X$ satisfying:
\begin{enumerate}[(i)]
\item
for every triple $\{x,y,z \} \in \Theta$ the triangle with vertices $\{x,y,z \}$ contains the origin in its interior,
\item
the intersection of any two triples of $\Theta$ contains at most one point.
\item
for every $x \not= y \in X$ there is $T \in \Theta$ such that  $\{x,y \} \subset T$.
\end{enumerate}
Then, $X$ is contained in a 2-plane.
\end{lemma}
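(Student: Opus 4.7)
The plan is to projectivize and reduce to a classical incidence-geometric result. First, I would combine conditions (ii) and (iii) to observe that every pair of points in $X$ lies in exactly one triple of $\Theta$, i.e., $\Theta$ is a Steiner triple system on $X$; this is the key combinatorial fact that will allow me to rule out ordinary lines later.

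Second, let $\phi : X \to \mathbb{RP}^{2}$ be the map sending $x$ to the line $\mathbb{R}x$ through the origin. I would establish that $\phi$ is injective and that $\phi$ sends each triple to three distinct collinear points. For injectivity, suppose $x\neq y$ with $\phi(x)=\phi(y)$, so $y=\lambda x$ for some nonzero $\lambda\in\mathbb{R}$, and let $z$ be such that $\{x,y,z\}\in\Theta$. If $\lambda>0$, then writing the positive dependence $\alpha x+\beta y+\gamma z=0$ from (i) forces $z\in\mathbb{R}x$ and the triangle degenerates; if $\lambda<0$, then $0$ lies on the segment $xy$, which is on the boundary of triangle $xyz$ rather than its interior. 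Both contradict (i). For the second point, condition (i) makes each triple $\{x,y,z\}\in\Theta$ positively dependent, so $x,y,z$ span a $2$-plane through the origin, and hence $\phi(x),\phi(y),\phi(z)$ are three distinct collinear points of $\mathbb{RP}^{2}$.

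Third, I would apply the Sylvester--Gallai theorem to the finite set $\phi(X)\subset \mathbb{RP}^{2}$; this is legitimate in the projective plane by passing to an affine chart whose line at infinity avoids the finitely many points of $\phi(X)$. If $\phi(X)$ were not contained in a single projective line, Sylvester--Gallai would produce an \emph{ordinary line} meeting $\phi(X)$ in exactly two points, say $\phi(x)$ and $\phi(y)$. But by the Steiner-triple property, the unique triple $T\in\Theta$ with $\{x,y\}\subset T$ would place a third point $\phi(z)$ (distinct from $\phi(x),\phi(y)$ by injectivity) on the line through $\phi(x),\phi(y)$, contradicting the ordinariness. Hence $\phi(X)$ lies on some projective line $\ell$, which means $X$ is contained in the $2$-plane $\phi^{-1}(\ell)\cup\{0\}$ through the origin, as required (small cases $|X|\leqslant 3$ are trivial).

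The main obstacle is the injectivity step: one needs to use the strict interior condition in (i), not merely collinearity of each triple, in order to preclude both the "coincident ray" and the "antipodal" collapses of $\phi$; once that is done, the Steiner-triple structure slots exactly into the hypotheses of Sylvester--Gallai and the result falls out.
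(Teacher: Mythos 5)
Your proof is correct, and it takes a genuinely different route from the paper's. You projectivize: condition (i) gives a positive linear dependence $\alpha x+\beta y+\gamma z=0$ for each triple, so (after your injectivity check, which correctly uses the \emph{strict interior} hypothesis to rule out both the $y=\lambda x$ with $\lambda>0$ collapse and the antipodal one) each triple of $\Theta$ becomes three distinct collinear points of $\mathbb{RP}^2$, and (ii)+(iii) guarantee that every pair of image points has a third image point on their line; Sylvester--Gallai then forces total collinearity, i.e.\ $X$ lies in a $2$-dimensional linear subspace. The paper argues instead by fixing one triple $\{a,b,c\}\in\Theta$ and the plane $H=\mathrm{span}(a,b,c)$, splitting $X\setminus H$ into the two sides $G$ and $G'$, and double-counting: each pair in $G$ (resp.\ $G'$) forces a triple whose third vertex lies on the opposite side, and condition (ii) makes the resulting $4\binom{n}{2}$ edges between $G$ and $G'$ pairwise distinct, which exceeds the $n^2$ available edges for $n>2$ (with $n=1,2$ handled ad hoc). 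Your argument is shorter, avoids the small-case analysis, and only needs ``at least one triple per pair'' rather than exactly one; the trade-off is that it imports the (classical but nontrivial) Sylvester--Gallai theorem, whereas the paper's counting argument is entirely self-contained and elementary. Both are valid proofs of the lemma.
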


\begin{proof}
Let $\{a,b,c\} \in \Theta$ and let $H$ be the 2-plane through the origin containing $\{a,b,c\}$. Let us suppose that $X$ is not contained in $H$. Let $G\subset X$ be the points of $X$ lying on one side of $H$ and $G^\prime$ the points of $X$ on the other side of $H$. By $(ii)$, there is a bijection $f:G \to G^\prime $ such that for every point of $x\in G$ there is a point $f(x) \in G^\prime$ with $\{a,x,f(x)\}\in \Theta$. 

Let $n=|G|=|G^\prime|$. If $n=1$ let $G=\{x\}$ and $G^\prime=\{x^\prime\}$. By iii), $\{x,a,x^\prime\}\in\Theta$, but also $\{x,b,x^\prime\}\in\Theta$, contradicting ii).  If $n=2$,  let $G=\{x,y\}$ and $G^\prime=\{x^\prime,y^\prime\}$. By iii) we may assume without loss of generality that $\{x,y,x^\prime\}\in\Theta$, 
 {but we also also have $\{x^\prime, y^\prime,w\}\in\Theta$, where $w\in \{x,y\}$, contradicting  ii).}
  
  We thus suppose that $n>2$. By $(i)$ and $(ii)$, for every $\{x,y \}\subset G$ there is $\psi(x,y)\in G^\prime$ such that $\{x,y,\psi(x,y) \}\in \Theta$. For every pair  $\{x,y \}$ of $G$ consider the two edges $(x,\psi(x,y) )$ and $(y,\psi(x,y))$. Since the intersection of any two triples of $\Theta$ contains at most one point we have that if  $\{u,v \}\not= \{x,y \}, u,v\in G$, then the four edges  $$(x,\psi(x,y) ), (y,\psi(x,y)), (u,\psi(u,v) ), (v,\psi(u,v))$$ are different.  Similarly, for every $\{x',y' \}\subset G^\prime$ there is $\psi(x',y')\in G$ such that $\{x',y',\psi(x',y') \}\in \Theta$. If we consider two different pairs of $G^\prime$,   $\{u',v' \}\not= \{x',y' \}$, then the four edges  $$(x',\psi(x',y') ), (y',\psi(x',y')), (u',\psi(u',v') ), (v',\psi(u',v'))$$ are different. Furthermore, if $\{x^\prime,y^\prime \}$ is a pair of $G^\prime$ and $\{x,y \}$ is a pair of $G$, then the four edges  $$(x,\psi(x,y) ), (y,\psi(x,y)), (x^\prime,\psi(x^\prime,y^\prime) ), (y^\prime,\psi(x^\prime,y^\prime))$$ are different. This implies that there are at least  $4{ n \choose 2 }$ edges between $G$ and $G^\prime$  {since we might have} two edges for every pair in $G$ and two edges for every pair in $G^\prime$, which is impossible since $4{ n \choose 2 }>n^2$ ($=$ the number of edges between $G$ and $G'$) for $n >2$.  Hence $X\subset H$.
\end{proof}

We may now determine (complete) Kneser transversals for codimensions 2 and 3.

\begin{theorem}\label{th:kt}
Let $X=\{x_1, x_2, \dots x_n\}$  be a collection of $n=d+2(k-\lambda)$ points in general position in $\mathbb{R}^{d}$. Suppose that $L$ is a $(d-\lambda)$-plane transversal to the convex hulls of all $k$-sets of $X$ with $\lambda=2, 3$ and $k\geq \lambda +2$ and $d\geq \lambda$.  Then, either
\begin{enumerate}[(1)]
\item
$L$ is a complete Kneser transversal (i.e., it contains $d-\lambda+1$ points of $X$) or
\item
$|X\cap L|= d-2(\lambda-1)$ and the other $2(k-1)$ points of $X$ are matched in $k-1$ pairs in such a way that $L$ intersects the corresponding closed segments determined by them.
\end{enumerate}
\end{theorem}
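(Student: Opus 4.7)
The plan is to parametrize by $p := |X \cap L|$ and project to reduce to a convex-geometric problem in $\R^\lambda$. General position gives $p \leq d - \lambda + 1$; equality is case~(1), so assume $p \leq d - \lambda$. Let $\pi : \R^d \to \R^\lambda$ be the orthogonal projection with kernel parallel to $L$, normalized so that $\pi(L) = \{o\}$. Then $X \cap L$ projects to $o$ and $Y := X \setminus L$ projects, by general position, to $m := n - p$ distinct points of $\R^\lambda \setminus \{o\}$. A $k$-subset of $X$ meeting $L$ is trivially transversal to $L$, so the hypothesis reduces to: every $k$-subset of $\pi(Y)$ contains $o$ in its convex hull, equivalently, no open halfspace of $\R^\lambda$ through $o$ contains $\geq k$ points of $\pi(Y)$. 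Integrating $\mathbf{1}[\langle v, \pi(y_i)\rangle > 0]$ over $v \in S^{\lambda - 1}$ and using this halfspace bound yields $m \leq 2(k-1)$, hence $p \geq d - 2(\lambda - 1)$. Together with $p \leq d - \lambda$, the admissible $p$ is $d - 2$ when $\lambda = 2$, and belongs to $\{d-4, d-3\}$ when $\lambda = 3$.

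At equality $m = 2(k-1)$, the halfspace count equals $k-1$ on each side. As $v$ moves on $S^{\lambda - 1}$, the count changes only when $v$ crosses a great subsphere $\{v : \langle v, \pi(y_i)\rangle = 0\}$; invariance forces each such crossing to be balanced by a simultaneous opposite crossing of another great subsphere, which occurs iff $\pi(y_i)$ and $\pi(y_j)$ lie on a common line through $o$ on opposite sides. Hence $\pi(Y)$ partitions into $k-1$ antipodal pairs. Each such pair lifts in $\R^d$ to a pair $\{y_i, y_j\} \subset Y$ whose closed segment $[y_i, y_j]$ meets $L$, producing the matching of case~(2).

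The remaining step, for $\lambda = 3$, is to exclude the intermediate value $p = d - 3$, where $m = 2k - 3$ is odd and the antipodal argument breaks. Here $F := \aff(X \cap L)$ is a $(d-4)$-flat and $L$ has a single direction of freedom inside the pencil of $(d-3)$-planes through $F$. The plan is to use Lemma~\ref{lem1}, applied to suitable lower-dimensional projections of $\pi(Y)$, to produce triangles of $\pi(Y)$ containing $o$ in their interiors, and then to assemble such triangles into a system $\Theta$ satisfying hypotheses (i)--(iii) of Lemma~\ref{lem2}. The latter forces $\pi(Y)$ into a 2-plane through $o$, and thus $X$ into a hyperplane of $\R^d$, contradicting general position. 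The main obstacle is precisely the construction of $\Theta$: one must leverage the odd parity of $m$ and the transversality to every $k$-set to cover all pairs of $\pi(Y)$ by triples while respecting the pairwise-intersection condition, and this constitutes the technical core of the proof.
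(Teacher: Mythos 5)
Your counting argument is correct and genuinely different from the paper's: instead of the paper's iterated hyperplane count, you project along $L$ onto $L^\perp\cong\R^\lambda$ and average the halfspace indicator over $S^{\lambda-1}$ to get $|X\setminus L|\le 2(k-1)$, then read the matching off the forced constancy of the halfspace count at equality. This cleanly settles all of $\lambda=2$ and the subcase $|X\cap L|=d-4$ of $\lambda=3$, modulo the routine check (general position for four or more points, the balance condition for an odd split) that no line through $o$ carries more than two points of $\pi(Y)$, so that the pairing really is a partition and each closed segment meets $L$.

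The proposal nonetheless has a genuine gap, and you have named it yourself: the exclusion of $|X\cap L|=d-3$ when $\lambda=3$ is only a plan, not a proof. This is not a loose end; it is the technical core of the paper's argument (its ``First step''), and without it the theorem is not established for $\lambda=3$, since $|X\cap L|=d-3$ is exactly the value that alternatives (1) and (2) must rule out. What the paper actually does there: it takes $\Theta$ to be the set of triples $\{x,y,z\}\subset X-L$ whose triangle meets $L$ in one interior point; condition (ii) of Lemma~\ref{lem2} holds because two triples sharing an edge would place $d+1$ points of $X$ in a hyperplane; condition (iii) is obtained by spanning a hyperplane $H$ with $L$, $x$ and $y$, using transversality to all $k$-sets to force exactly $d$ points of $X$ into $H$ with $k-3$ on each side, extracting a third point $z\in (X-L)\cap H$, and showing the triangle $xyz$ must meet $L$ (else a $k$-set avoids $L$); and a separate subcase first removes the pair $\{a,b\}$ (at most one such pair exists, by general position) whose spanning line meets $L$. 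Only after all of this does Lemma~\ref{lem2}, applied to the projection onto $L^\perp$, force $X$ into a hyperplane and give the contradiction. Until you carry out this construction --- in particular, verify that the third vertex $z$ can always be chosen outside $\{a,b\}$ and that distinct pairs receive triples sharing at most one point --- the $\lambda=3$ case of the theorem remains unproved.
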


\begin{proof}
{\em Case $\lambda =2$}. Since $X$ is in general position, there is a point $x\in X$ which is not in $L$.  Let $H$ be the hyperplane generated by $L$ and $x$. By general position there are at most $d$ points in $H$. Since $L$ is a  transversal to the convex hulls of all $k$-sets of $X$, then at each side of $H$ there are at most $k-2$ points. The fact that $X$ has $d+2(k-2)$ points implies that $H$ has exactly $d$ points of $X$ and there are exactly $k-2$ points at each side of $H$.  Note now that $L$ is a hyperplane of $H$. If in one of the open  halfspaces of $H$ determined by $L$ there are two points of $X$, then these two points together with the $k-2$ points outside $H$, but in the same side, give rise to a $k$-set whose convex hull does not intersect $L$. This implies that either $L$ is a complete Kneser transversal or that  there are precisely $d-2$ points of $X$ in $L$. Furthermore,  there is  a unique $y\in X\cap H-L$ such that the closed segment determined $x$ and $y$ intersect $L$.  By repeating the same argument with the other $2(k-2)$ points of $X$ outside $H$, we obtain the desired conclusion.
\medskip

{\em Case $\lambda=3$}. The proof consist of the following two steps.  In the first step, we analyze the case in which  $|X\cap L| = d-3$ and conclude that this case never happens. In the second step, we analyze the case in which
$|X\cap L| \leq d-4$.  In this second case, we conclude that $|X\cap L| =d-4$ and  the other $2(k-1)$ points of $X$ are matched in $k-1$ pairs in such a way that $L$ intersects the corresponding closed segments determined by them.

{\em First step}.   Let us assume that $X\cap L=\{a_1,\dots ,a_{d-3}\}$ has exactly $d-3$ points. Let $\Theta$ be the collection of triples $\{x,y,z \}$ of $X-L$ such that the interior of the triangle with vertices  $\{x,y,z \}$ intersects the $(d-3)$-plane $L$ in exactly one point.  Note that  the intersection of any two triples of $\Theta$ contains at most one point, otherwise if $\{x,y,z_1 \}$ and $\{x,y,z_2 \}$ belong to $\Theta$, then $$\{a_1,\dots ,a_{d-3}, x,y,z_1,z_2\}$$ are $d+1$ points of $X$ contained in a plane of dimension $d-1$, contradicting the fact that $X \subset \mathbb{R}^{d}$ lies in general position.
\medskip

{\em Subcase a)} $L$ does not intersect any line generated by points of $X-L$. We shall prove that in this case  given $x \not= y \in X-L$ there is $z\in X-L$ such that the triple $\{x,y,z \}\in\Theta$. Indeed, let $x \not= y \in X-L$.  Then $L, x$ and $y$ generates a hyperplane $H^{d-1}$ of $\mathbb{R}^{d}$.  By general position there are at most $d$ points in $H^{d-1}$. Since $L$ is transversal to the convex hulls of all $k$-sets of $X$, then at each side of $H^{d-1}$ there are at most $k-3$ points. The fact that $X$ has $d+2(k-3)$ points implies that $H^{d-1}$ has exactly $d$ points of $X$ and there are exactly $k-3$ points at each side of $H$. Therefore, there is $z\in (X-L)\cap H^{d-1}$ different from $x$ and $y$. The triangle with vertices $\{x,y,z \}$ intersects $L$, otherwise the $k-3$ points of $X$ on one side of $H^{d-1}$ plus $x, y$ and $z$ give rise to a $k$-set of $X$ that avoids $L$.
Finally, the interior of the triangle with vertices  $\{x,y,z \}$ intersects  $L$ in exactly one point,
 because $L$ does not intersect any line generated by points of $X-L$. With this we have proved that given $x \not= y \in X-L$ there is $z\in X-L$ such that the triple $\{x,y,z \}\in\Theta$.
\smallskip

Let $L^\perp$ be the 3-dimensional plane through the origin orthogonal to $L$ and let $\pi:\mathbb{R}^{d}\to L^\perp$ be the orthogonal projection.  Let $X^\prime = \pi (X-L)$ and let $\Theta^\prime$ be the collection of triples of $X^\prime$ consisting of sets  $\{\pi(x),\pi(y),\pi(z) \}$ for which  $\{x,y,z\}\in \Theta$. By Lemma \ref{lem2}, $X^\prime$ lies in a two dimensional plane of $L^\perp$ and hence  $X$ lies in a hyperplane of  $\mathbb{R}^{d}$ contradicting  the fact that $X$ is in general position.
\medskip

{\em Subcase b)} Assume there are $a,b \in X-L$ such that $L$ intersects the line through $a$ and $b$. By general position  $L$ does not intersect any line generated by points of $X-(L\cup \{a,b \})$, otherwise, if $x,y\in X-(L\cup \{a,b \})$ are two points with the property that $L$ intersects the line through $x$ and $y$. Then, $\{a_1,\dots ,a_{d-3}, x,y,a,b\}$ are $d+1$ points of $X$ contained in a plane of dimension $d-1$, contradicting the fact that $X \subset \mathbb{R}^{d}$ lies in general position.

For the Subcase b),  let $\Theta$ be the collection of triples $\{x,y,z \}$ of $X-(L\cup \{a,b \})$ such that the interior of the triangle with vertices  $\{x,y,z \}$ intersects the $(d-3)$-plane $L$ in exactly one point. As above, given $x \not= y \in X-(L\cup \{a,b \})$ there is $z\in X-L$ such that the interior of the triangle with vertices  $\{x,y,z \}$ intersects $L$ in exactly one point. We  {claim} that $z\notin \{a,b\}$.  {Indeed, let us} consider $H$ the hyperplane generated by the points $(X\cap L)\cup \{x,y,z \}$. If $z\in \{a,b \}$, then $\{a,b\}\subset H$ and hence $H$ contains more than $d$ points of $X$ contradicting the general position hypothesis.
By using Lemma \ref{lem2}, as in the Subcase a) but now for $X-\{a,b\}$,  it  {follows} that $X-\{a,b\}$ lies in a hyperplane of  $\mathbb{R}^{d}$ contradicting  again the fact that $X$ is in general position.
\smallskip

{\em Second  step}. We may assume $d\geq 4$, otherwise if $d=3$ and $L$ is a non-complete Kneser transversal of dimension zero then $|X\cap L| = 0=d-3$, which is imposible by the first step. 

Suppose that $L$ has at most $d-4$ points of $X$. In this case, since $X$ is in general position, there is a point $x\in X$ which is not in $L$.  Let $H^\prime$ be the $(d-2)$-dimensional plane generated by $L$ and $x$. Again by general position, there is a point  $y\in X$ which is not in $H^\prime$. Let $H$ be the hyperplane generated by $L$ and $x$ and $y$. By our previous arguments, $H$ has exactly $d$ points of $X$ and there are exactly $k-3$ points at each side of $H$.   Note that $L$ is transversal to every triangle of  $Z=X\cap H$, otherwise, these three points together with the $k-3$ points outside $H$, but in the same side, give rise to a $k$-set whose convex hull does not intersect $L$.  
\smallskip

By the above and Lemma \ref{lem1} with $m=d-1$, $L$ has exactly  $d-4$ points of $X$ and therefore there are exactly 4 points $T=\{t_1,t_2,t_3,t_4\}$ of $X$ in $H$ which are not in $L$. Moreover, $L$ is transversal to two opposite edges of the tetrahedron $T$.  By repeating the same argument with the remaining $2(k-3)$ points of $X$ outside $H$, we obtain the desired matching. This is so because if we get say pair $uv$ in the first step, then we cannot get the pair $uw$ in the next step. The reason is the following. Let $H^\prime$ be the $(d-2)$-plane generated by $L$ and $\{u,v,w\}$. By general position there is $x\in X$ which is not in $H^\prime$.  Let $H$ be the hyperplane generated by $H^\prime$ and $x$. Consequently, $H$ contains exactly $d$ points of $X$ and as above there are exactly $k-3$ points of $X$ at each side of $H$. This implies that $L$ is transversal to every triangle of  $X\cap (H-L)$, otherwise, these three points together with the $k-3$ points outside $H$, but in the same side, give rise to a $k$-set whose convex hull does not intersect $L$.  But this is a contradiction because the triangle with vertices $\{v,w,x\}$ does not intersect $L$.
\end{proof}

 It is not difficult to generalize Theorem ~\ref{th:kt} for $\lambda>3$ if we ask $d-2(\lambda-1)$ points of $X$ to be contained in a $(d-\lambda)$-plane transversal $L$ and a collection of simplices (formed with the rest of points) of different dimensions (not necessarily intervals) to be all intersected by $L$. However,  we have not been able to prove that also for $\lambda>3$ every simplex can be chosen to be an interval.

\medskip

We may now prove Theorem~\ref{stability}.

\begin{proof}[Proof of Theorem~\ref{stability}]
Let us first introduce some notation. Let  $[n]=\{1, \dots,n\}$ and let $\binom{[n]}{s}$ be the collection of all subsets of size $s$. We denote by $\Omega$ the finite set of partitions of $[d + 2(k-\lambda)]$ of the form $[d + 2(k-\lambda)]=B_1\sqcup B_2\sqcup \dots \sqcup B_k$, where $|B_1|= d-2(\lambda-1)$ and $|B_i|= 2$, for every $2\leq i \leq k$.

Let $X$ be our collection of points that we suppose, without loss of generality, in general position (if not, we slighly perturb $X$). Let $L$ be a non-complete $(d-\lambda)$-plane transversal to the convex hulls of all $k$-sets of $X$.   {Since $X$ is in general position then, by Theorem ~\ref{th:kt}, we have that} $ |X\cap L|= d-2(\lambda-1)$ and the other $2(k-1)$ points of $X$ are matched in $k-1$ pairs in such a way that $L$ intersects the corresponding closed segments determined by them. Therefore, this non-complete Kneser transversal  $L$ naturally yields a partition $\alpha \in \Omega$.  If this is so, we shall say that $L$ is a Kneser transversal of {\em type $\alpha$}.
We have the  following immediate consequences of this definition.

\begin{enumerate}
\item
To every non-complete Kneser transversal it corresponds a unique type $\alpha\in \Omega$.
\item
By Lemma \ref{lem3},  since $k -\lambda\geq 2$, any two non-complete Kneser transversals of the same type coincide.
\item
Since $\Omega$ is a finite set then $X$ admits a finite number of Kneser transversals.
\item
Given $X$ and $\alpha\in \Omega$, there exists $\epsilon >0$ such that if $X$ does not admit a Kneser transversal of type $\alpha$ and  $X^\prime=\{x^\prime_1,\dots ,x^\prime_n\}$ is a collection of points in $\R^{d}$  in general position such that $| x_i - x^\prime _i |< \epsilon$, for every $i\in[n]$, then  $X^\prime$ does not admit a Kneser transversal of type $\alpha\in \Omega$.
\end{enumerate}

Our next goal is to show that if $X$ admits a Kneser transversal of type $\alpha\in \Omega$ and $\epsilon >0$ is given, then we can choose a collection of points in $\R^{d}$  in general position, $X^\prime=\{x^\prime_1,\dots ,x^\prime_n\}$,  such that $|x_i - x^\prime |< \epsilon$, for every $i\in[n]$ and with the property that $X^\prime$ does not admit a Kneser transversal of type $\alpha\in \Omega$. To see this, we may assume without loss of generality that
$$
\alpha= \{ 1, \dots ,d-2(\lambda-1)\}, \{d-2\lambda +3, d-2\lambda+4\}, \dots , \{d+2k+2\lambda-1, d+2k-2\lambda\}
$$
and $L$ is a $(d-\lambda)$-plane that contains $\{ x_1, \dots ,x_{d-2(\lambda-1)}\}$ and intersects the $k-1$ closed segments with extreme points
$$
\{x_{d-2\lambda +3}, x_{d-2\lambda+4}\}, \dots ,  \{x_{d+2k+2\lambda-1}, x_{d+2k-2\lambda}\}.
$$

Choose $x^{\prime}_{d+2k-2\lambda}$ such that $| x_{d+2k-2\lambda}-x^{\prime}_{d+2k-2\lambda}|<\epsilon $ and in such a way that $L$ does not intersect the line generated by $\{x_{d+2k+2\lambda-1}, x^{\prime}_{d+2k-2\lambda}\}$. Suppose now $X^\prime =X \cup \{x^{\prime}_{d+2k-2\lambda}\}-\{x_{d+2k-2\lambda}\}$ and let $L^\prime$ be a Kneser transversal of $X^\prime$ of type $\alpha$. By Lemma~\ref{lem3}, since $k-2\geq \lambda$, $L$ must be equal than $L^\prime$, which is imposible because $L$ does not intersect the line generated by $\{x_{d+2k+2\lambda-1}, x^{\prime}_{d+2k-2\lambda}\}$. Furthermore, $\epsilon>0$ is so small that if $X$ does not contain a Kneser transversal of type $\beta\in\Omega$, $X^\prime$ either. 

As a consequence of our last statement and of (1)-(4),  it follows that given $\epsilon >0$ sufficiently small,  there is a collection of points in $\R^{d}$  in general position $X^\prime=\{x^\prime_1,\dots ,x^\prime_n\}$,  such that $| x_i - x^\prime _i | < \epsilon$, for every $i\in[n]$ and with the property that the number of non-complete Kneser transversals of $X$ is bigger that the number of non-complete Kneser transversals of $X^\prime$.  Since the number of non-complete Kneser transversals of $X$ is finite, this completes the proof of our theorem.
\end{proof}

We believe that Theorem~\ref{stability} is also true for any $\lambda> 3$ but the proof needs a more difficult and complicated version of Theorem~\ref{th:kt}.

%%%%%%%%%%%%%%%%%%%%%%%%%%%%%%%%%%%%%%%%%%%%%%%%%%%%%%%%%%%%
%%%%%%%%%%%%%%%%%%%%%%%%%%%%%%%%%%%%%%%%%%%%%%%%%%%%%%%%%%%%
\section{Bounds for $m(k,d,\lambda)$ when $\lambda=2,3$}\label{sec:bound}
%%%%%%%%%%%%%%%%%%%%%%%%%%%%%%%%%%%%%%%%%%%%%%%%%%%%%%%%%%%%
%%%%%%%%%%%%%%%%%%%%%%%%%%%%%%%%%%%%%%%%%%%%%%%%%%%%%%%%%%%%

We start by proving the following upper bound

\begin{theorem}\label{th:bound}
Let $\lambda=2,3$, $k-\lambda \geq 2$ and $2(\lambda-1)\leq d\leq 2(k-2)$. Then,
$$
m(k,d,\lambda)<d+2(k-\lambda).
$$
\end{theorem}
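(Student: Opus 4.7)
The plan is to exhibit an $n$-point configuration $X \subset \R^{d}$, with $n := d + 2(k-\lambda)$, in general position that admits no Kneser transversal; this witnesses $m(k,d,\lambda) < n$.

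By Theorem~\ref{th:kt}, every Kneser transversal $L$ of such an $X$ belongs to one of two classes: either $L$ is \emph{complete}, equal to $\aff(P)$ for some $(d-\lambda+1)$-subset $P \subset X$, or $L$ is \emph{non-complete}, containing exactly $d-2(\lambda-1)$ points $S \subset X$ with the remaining $2(k-1)$ points matched in $k-1$ pairs whose closed segments $L$ crosses. By Lemma~\ref{lem3}, in the non-complete case the transversal $L$ is uniquely determined by the pair $(S,M)$. Consequently, the set of candidate Kneser transversals for a given $X$ in general position is finite, parametrised by a finite combinatorial index set.

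The key observation is that, for each fixed combinatorial type (a choice of $P$, or of $(S,M)$), the collection $\mathcal{B}$ of configurations for which the associated candidate is actually a Kneser transversal is a \emph{proper} closed subset of configuration space $(\R^{d})^{n}$. This is seen by orthogonal projection along $L$ (respectively, along $\aff(S)$): the transversality condition translates into a strong centrality condition on the origin with respect to a projected point set in $\R^{\lambda}$ in case (I), or into a specific intersection condition on a unique $(\lambda-1)$-plane in $\R^{2\lambda-1}$ meeting $k-1$ segments in case (II). For $\lambda=2$ in case (I), the required condition is that the origin attain Tukey depth $k-2$ with respect to the $2k-3$ projected points, saturating the universal upper bound $\lfloor(2k-3)/2\rfloor=k-2$; this is a boundary condition of positive codimension on the configuration. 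Analogous codimension estimates—using Lemma~\ref{lem3} to rigidify non-complete candidates—apply to the remaining cases.

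Since the combinatorial index set is finite, the union of all bad sets $\mathcal{B}$ is a proper closed subset of $(\R^{d})^{n}$; its complement is open and dense, and any $X$ in this complement is a valid example. The main obstacle is the codimension verification in case (II) when $\lambda=3$, where the candidate $L$ is a 2-plane in $\R^{5}$ meeting a collection of $k-1$ segments: a delicate parameter count, together with the rigidity given by Lemma~\ref{lem3}, is required to confirm that each associated bad set has positive codimension. In practice the required configuration $X$ may be produced as a small generic perturbation of a highly symmetric extremal configuration, such as $n$ vertices of a cyclic polytope on the moment curve; the hexagon configuration of the introduction (\emph{cf.}\ Figure~\ref{figura1}) furnishes the base case $\lambda=d=2$, $k=4$.
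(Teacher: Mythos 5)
There is a genuine gap, and it lies in your treatment of the complete case (your case (I)). Your strategy requires each bad set $\mathcal{B}$ to be nowhere dense in $(\R^{d})^{n}$, but for a fixed $(d-\lambda+1)$-subset $P$ the set of configurations for which $\aff(P)$ is a Kneser transversal can have non-empty interior. Your codimension claim rests on the assertion that attaining the maximal Tukey depth $k-2$ for the origin with respect to the $2k-3$ projected points is a ``boundary condition''; it is not. A radially balanced configuration (for $k=4$, five points near the vertices of a regular pentagon centred at the origin) has every closed halfplane through the origin containing at least $\lfloor (2k-3)/2\rfloor=k-2$ of the points, and this persists under small perturbations. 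The paper's own computations refute the genericity claim outright: for $d=3$, $k=4$, $\lambda=2$, which satisfies $2(\lambda-1)\le d\le 2(k-2)$, Theorem~\ref{th:comp1} shows that $124$ of the $246$ order types of $7$ points in general position in $\R^{3}$ admit a complete Kneser transversal; since the existence of a complete Kneser transversal is an order-type invariant, each of these order types is a full-dimensional open cell of configuration space contained in your union of bad sets. A generic perturbation of an arbitrary starting configuration therefore need not produce the desired example. The fact that your argument never uses the hypothesis $d\le 2(k-2)$ is the symptom of this: without that hypothesis the conclusion fails, because for large $d$ complete Kneser transversals exist for every configuration of $d+2(k-\lambda)$ points.

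What is missing is a reason to start from a configuration whose order type already excludes complete Kneser transversals, and this is exactly where the hypothesis $d\le 2(k-2)$ must enter. The paper takes $X$ on the moment curve and invokes the bound $m^{*}(k,d,\lambda)\le (d-\lambda+1)+\left\lfloor\left(2-\frac{\lambda-1}{\lceil d/2\rceil}\right)(k-1)\right\rfloor$ of \cite[Corollary 3.5]{CMMMR}, which is strictly less than $d+2(k-\lambda)$ precisely when $\lceil d/2\rceil<k-1$, i.e.\ $d\le 2(k-2)$; this rules out complete transversals for the cyclic order type. It then applies Theorem~\ref{stability} to perturb within that order type so that all non-complete transversals disappear as well. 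Your handling of the non-complete case (rigidity via Lemma~\ref{lem3}, finitely many combinatorial types, each destroyed by a small perturbation) is essentially the content of Theorem~\ref{stability} and is sound in outline, but it cannot substitute for the missing complete-case step.
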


\begin{proof}
Let $X=\{x_1,\dots ,x_n\}$ be a finite collection of points in $\R^{d}$ embedded in the moment curve on $n=d+2(k-\lambda)$ vertices. On one hand, by Theorem \ref{stability},  there exists a collection of points $X^\prime=\{x^\prime_1,\dots ,x^\prime_n\}$ in $\R^{d}$ in general position with the order type as the cyclic polytope and with the property that every $(d-\lambda)$-plane transversal to the convex hull of the $k$-sets of $X^\prime$ is complete (i.e.,  it contains  $d-\lambda +1$ points of $X^\prime)$. On the other hand, by \cite[Corollary 3.5]{CMMMR} we have that 
$$
m^*(k,d,\lambda)\le (d-\lambda+1)+\left\lfloor\left(2-\frac{\lambda-1}{\lceil \frac{d}{2}\rceil}\right)(k-1)\right\rfloor
$$
which is strictly smaller than $d+2(k-\lambda)$ when  $\lceil \frac{d}{2}\rceil <k-1$ and thus implying that $X^\prime$ does not admit  a complete transversal. Therefore, the collection of points $X^\prime$ in $\R^{d}$ does not have a $(d-\lambda)$-plane transversal to the convex hull of the $k$-sets implying that $m(k,d,\lambda)<d+2(k-\lambda)$.
\end{proof}

A better upper bound was proved by Tancer \cite{T} in the special case when $d=3$ and $k\geq 6$. In fact, it is possible to embed $2k-2$ points of the form $(t,t^2,f(t))$ in general position in $\R^3$ where $f(t)$ is a fast growing function in such a way that there is no transversal line to the convex hull of the $k$-sets. We point out that this approach works only when $d=3$.
\medskip

The following result due to Bukh, Matou\v sek and Nivasch \cite{BMN} was proved by using equivariant topology (see also \cite{BN} and \cite{MP} for some related results).

\begin{quote}
Let $X=\{x_1, x_2, \dots x_n\}$  be a collection of $n$ points in $\mathbb{R}^{d}$.  Then, there exists a codimension two affine plane $L$ and $2d-1$ hyperplanes passing through $L$ that divide $\mathbb{R}^{d}$ into $4d-2$ parts, each containing at most $\frac{n}{4d-2} + O(1)$ points of $X$.
\end{quote}

Every hyperplane $H$ through $L$ leaves at least $2d-2$ of these parts on each side of $H$. Therefore, on each side of $H$ we must have at least $$n-2d\left(\frac{n}{4d-2}+O(1)\right)=\frac{(2d-2)n}{4d-2}-2dO(1)$$ points of $X$ and thus
$$
\left\lfloor \frac{(2d-2)n}{4d-2}-2dO(1)\right\rfloor\leq \tau(n,d,2).
$$
Furthermore, if $k\geq  \frac{2dn}{4d-2}+2dO(1)$, the codimension two affine plane $L$ intersects the convex hull of every $k$-set of $X$, implying that

\begin{equation}\label{eq3}
\left\lceil \frac{(4d-2)}{2d}\left(k-2dO(1)\right)\right\rceil\leq m(k,d,2).
\end{equation}

From Equation~\eqref{eq3} and Theorem~\ref{th:bound}, we obtain
$$
2-\frac{1}{d}\leq \limsup_{k\to\infty}\frac{m(k,d,2)}{k} \leq 2.
$$
Therefore, for $d\geq3, \lambda=2$ and $k$ large enough the conjectured value \cite[Conjecture 1]{ABMR}
$m(k,d,\lambda )=d-\lambda +k+\left\lceil \frac{k}{\lambda }\right\rceil -1$ does not hold.

%%%%%%%%%%%%%%%%%%%%%%%%%%%%%
\section {Computational results}\label{computer}
%%%%%%%%%%%%%%%%%%%%%%%%%%%%%%%

For a general background on oriented matroid theory we refer the reader to \cite{Bjo-99}.

An \emph{abstract order type} is the relabeling class of an acyclic oriented matroid. The abstract order types of realizable oriented matroids are called {\em order types} corresponding to {\em isomorphism types} of configurations of points in the Euclidean space.
\medskip

In \cite{ABMR}, it is proved that $m(3,2,4)=6$ (i.e., there always exists a transversal line to all tetrahedra formed by any configuration of 6 points in $\R^3$) and that there is never a transversal line to all tetrahedra formed by any configuration of 8 points in general position in $\R^3$.

\par\bigskip
What about transversal lines to all tetrahedra in configurations of 7 points in $\R^3$?

\bigskip\par

We will answer this question by classifying the configurations of 7 points in $\R^3$ having a (complete) Kneser line (if any). It is known that there are $5083$ abstract order types of rank $r=4$ ($d=3$) of cardinality $n=7$ \cite{F}. Among these $5083$ abstract order types, $246$ of them are the order type of some configuration of points in general position.

%%%%%%%%%%%%%%%%%%%%%%%%%%%%%%%%%%%%%%%%%%%%%%%%%%%%%%%%%%%%
\subsection{Complete Kneser transversal line}
%%%%%%%%%%%%%%%%%%%%%%%%%%%%%%%%%%%%%%%%%%%%%%%%%%%%%%%%%%%%

We investigate whether there exist a complete transversal line to the tetrahedra of $E=\{x_1,\dots ,x_7\}$. For this, we first detect when the line joining $x_{i_1}$ and $x_{i_2}$ intersects the interior of the triangle $(x_{i_3},x_{i_4},x_{i_5})$. The following proposition is a direct consequence of \cite[Proposition 2.2]{CMMMR}, but we state this particular case here for self-containment.

\begin{proposition}\label{prop1}
Let $E:=\{x_1,\ldots,x_7\}$ be a set of 7 points in general position in $\R^3$ and let $\mathcal{M}=(E,B)$ its associated oriented matroid. Then, the line $(x_{i_1},x_{i_2})$ intersects the interior of the triangle $(x_{i_3},x_{i_4},x_{i_5})$ if and only if $sg(x_{i_3})=sg(x_{i_4})=sg(x_{i_5})$ in the circuit $\{x_{i_1},x_{i_2},x_{i_3},x_{i_4},x_{i_5}\}$ of $\mathcal{M}$, where $sg(x)$ stands for the sign of $x$ in $\mathcal{M}$.
\end{proposition}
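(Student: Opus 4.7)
The plan is to translate the geometric assertion into a statement about the unique affine dependence among the five points $x_{i_1},\dots,x_{i_5}$, and then read off the sign pattern of the corresponding oriented matroid circuit. Since the seven points of $E$ are in general position in $\R^3$, the matroid $\mathcal{M}$ has rank $4$ and any four points are affinely independent; consequently, the subset $\{x_{i_1},x_{i_2},x_{i_3},x_{i_4},x_{i_5}\}$ is a circuit with full support, supported by a unique (up to a global sign flip) affine dependence $\sum_{j=1}^{5}\lambda_j x_{i_j}=0$ with $\sum_j \lambda_j=0$ and all $\lambda_j\ne 0$. By definition $sg(x_{i_j})$ is the sign of $\lambda_j$, so the entire claim reduces to relating the sign vector $(\lambda_1,\ldots,\lambda_5)$ to the incidence between the line and the triangle.

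For the forward direction I would assume the line through $x_{i_1},x_{i_2}$ meets the interior of the triangle $(x_{i_3},x_{i_4},x_{i_5})$ at a point $p$. Writing $p=\mu x_{i_1}+(1-\mu)x_{i_2}=\alpha x_{i_3}+\beta x_{i_4}+\gamma x_{i_5}$ with $\alpha,\beta,\gamma>0$ and $\alpha+\beta+\gamma=1$, the difference of the two expressions for $p$ produces an affine dependence whose coefficients at $x_{i_3},x_{i_4},x_{i_5}$ are $-\alpha,-\beta,-\gamma$, all of the same (negative) sign. Uniqueness of the circuit up to a global sign flip then yields $sg(x_{i_3})=sg(x_{i_4})=sg(x_{i_5})$.

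For the converse, assuming $sg(x_{i_3})=sg(x_{i_4})=sg(x_{i_5})$, I would flip the sign of the dependence if necessary so that $\lambda_3,\lambda_4,\lambda_5<0$, and then rescale so that $|\lambda_3|+|\lambda_4|+|\lambda_5|=1$. Combining $\sum_j \lambda_j x_{i_j}=0$ with $\sum_j \lambda_j=0$ gives $\lambda_1+\lambda_2=1$ and
\[
\lambda_1 x_{i_1}+\lambda_2 x_{i_2}=|\lambda_3|x_{i_3}+|\lambda_4|x_{i_4}+|\lambda_5|x_{i_5}.
\]
The right-hand side is an interior point of the triangle (its barycentric coordinates are strictly positive), while the left-hand side is an affine combination of $x_{i_1}$ and $x_{i_2}$, hence a point of the line through them. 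The main subtlety worth flagging, and the only real obstacle in the write-up, is that the intersection point need not lie on the closed segment $[x_{i_1},x_{i_2}]$: if $\lambda_1,\lambda_2$ have opposite signs (which happens exactly when one of $x_{i_1},x_{i_2}$ lies in the tetrahedron spanned by the other four points), the meeting point sits on the extension of the segment. This is precisely why the statement is phrased in terms of the line and not the segment, and it is the one point at which one must take care when invoking \cite[Proposition 2.2]{CMMMR}.
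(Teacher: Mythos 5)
Your proof is correct and follows essentially the same route as the paper's: both reduce the claim to the sign pattern of the unique Radon partition (affine dependence) on the five points and observe that the line meets the open triangle exactly when $x_{i_3},x_{i_4},x_{i_5}$ all lie on the same side of that partition. The only difference is that you carry out the elementary coefficient computation explicitly (including the correct observation that the intersection point may lie outside the segment $[x_{i_1},x_{i_2}]$), whereas the paper enumerates the three admissible Radon partitions and delegates the geometric equivalence to a figure and to \cite[Proposition 2.2]{CMMMR}.
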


\begin{proof}
Since $\mathcal{M}$ is acyclic, it is not possible that the elements of the circuit $C:=\{x_{i_1},x_{i_2},x_{i_3},x_{i_4},x_{i_5}\}$ have all the same sign. Moreover, as depicted in Figure~\ref{fig1}, the line $(x_{i_1},x_{i_2})$ intersects the interior of the triangle $(x_{i_3},x_{i_4},x_{i_5})$ if and only if the Radon partition associated to $C$ is one of $\left\{ \left\{ x_{i_1},x_{i_2} \right\} , \left\{ x_{i_3},x_{i_4},x_{i_5} \right\} \right\}$ , $\left\{ \left\{ x_{i_1} \right\} , \left\{ x_{i_2},x_{i_3},x_{i_4},x_{i_5} \right\} \right\}$ or
$\left\{ \left\{ x_{i_2} \right\} , \left\{ x_{i_1},x_{i_3},x_{i_4},x_{i_5} \right\} \right\}$.
\end{proof}

\begin{figure}
\begin{center}
\begin{tabular}{cc}
\begin{tikzpicture}
\draw (-0.1,0.1) -- (0.1,-0.1);
\draw (0.1,0.1) -- (-0.1,-0.1);
\draw (-0.1,2.6) -- (0.1,2.4);
\draw (0.1,2.6) -- (-0.1,2.4);
\draw (-0.1,-2.4) -- (0.1,-2.6);
\draw (0.1,-2.4) -- (-0.1,-2.6);
\draw (0,0) -- (0,3);
\draw[dotted] (0,0) -- (0,-5/3);
\draw (0,-5/3) -- (0,-3);
\draw (-3,0) -- (2,1) -- (-1,-3) -- (-3,0);
\node[right] (1) at (0,2.5) {$x_{i_1}$};
\node[right] (2) at (0,-2.5) {$x_{i_2}$};
\node[left] (3) at (-3,0) {$x_{i_3}$};
\node[right] (4) at (2,1) {$x_{i_4}$};
\node[below] (5) at (-1,-3) {$x_{i_5}$};
\end{tikzpicture}
&
\begin{tikzpicture}
\draw (-0.1,0.1) -- (0.1,-0.1);
\draw (0.1,0.1) -- (-0.1,-0.1);
\draw (-0.1,2.6) -- (0.1,2.4);
\draw (0.1,2.6) -- (-0.1,2.4);
\draw (-0.1,1.35) -- (0.1,1.15);
\draw (0.1,1.35) -- (-0.1,1.15);
\draw (0,0) -- (0,3);
\draw[dotted] (0,0) -- (0,-5/3);
\draw (0,-5/3) -- (0,-3);
\draw (-3,0) -- (2,1) -- (-1,-3) -- (-3,0);
\node[right] (1) at (0,2.5) {$x_{i_1}$};
\node[right] (2) at (0,1.25) {$x_{i_2}$};
\node[left] (3) at (-3,0) {$x_{i_3}$};
\node[right] (4) at (2,1) {$x_{i_4}$};
\node[below] (5) at (-1,-3) {$x_{i_5}$};
\end{tikzpicture} \\
$\left\{ \left\{ x_{i_1},x_{i_2} \right\} , \left\{ x_{i_3},x_{i_4},x_{i_5} \right\} \right\}$ & $\left\{ \left\{ x_{i_2} \right\} , \left\{ x_{i_1},x_{i_3},x_{i_4},x_{i_5} \right\} \right\}$
\end{tabular}
\end{center}
\caption{Radon partitions where the line $(x_{i_1},x_{i_2})$ intersects the triangle $(x_{i_3},x_{i_4},x_{i_5})$.}\label{fig1}
\end{figure}
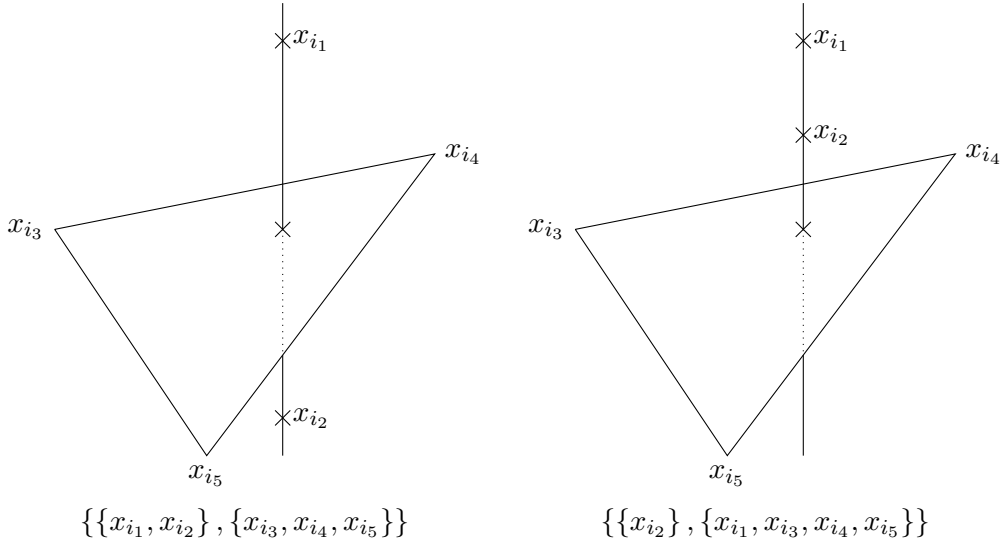

We notice that since the points of $E$ are in general position, then the line $(x_{i_1},x_{i_2})$ cannot intersect the triangle $(x_{i_3},x_{i_4},x_{i_5})$ on a vertex or an edge.

For each of the $\binom{7}{2}=21$ pairs $(x_{i_1},x_{i_2})$, we determine, by using Proposition~\ref{prop1}, if the line $(x_{i_1},x_{i_2})$ intersects the $\binom{5}{3}=10$ triangles of $E\setminus\{x_{i_1},x_{i_2}\}$. Since the points are in general position, it is easy to see that if $(x_{i_1},x_{i_2})$ intersects a tetrahedron $T$ whose vertices are in $E$, then $(x_{i_1},x_{i_2})$ intersects at least two faces (two triangles) of $T$ (c.f. \cite[Proposition 2.1]{CMMMR}). Therefore, if $(x_{i_1},x_{i_2})$ intersects the triangle $(x_{i_3},x_{i_4},x_{i_5})$, it intersects both tetrahedra $(x_{i_3},x_{i_4},x_{i_5},x_{i_6})$ and $(x_{i_3},x_{i_4},x_{i_5},x_{i_7})$. Finally, if the line $(x_{i_1},x_{i_2})$ intersects the $\binom{5}{4}=5$ tetrahedra generated from $E\setminus\{x_{i_1},x_{i_2}\}$, it immediately follows that $(x_{i_1},x_{i_2})$ is transversal to all the tetrahedra of $E$.

For instance, for $\mathcal{M}=OT(7,4,2)$ in the classification given in \cite{F}, that is the abstract order type representing a point configuration having the chirotope $\chi_{\mathcal{M}} : \mathcal{B} \rightarrow \{0,-,+\}$ given by
$$
\scriptsize
\begin{array}{ccccccccccccccccccc}
 & 1 & 1 & 1 & 1 & 2 & 1 & 1 & 1 & 2 & 1 & 1 & 2 & 1 & 2 & 3 & 1 & 1 & 1  \\
 & 2 & 2 & 2 & 3 & 3 & 2 & 2 & 3 & 3 & 2 & 3 & 3 & 4 & 4 & 4 & 2 & 2 & 3  \\
 & 3 & 3 & 4 & 4 & 4 & 3 & 4 & 4 & 4 & 5 & 5 & 5 & 5 & 5 & 5 & 3 & 4 & 4  \\
 & 4 & 5 & 5 & 5 & 5 & 6 & 6 & 6 & 6 & 6 & 6 & 6 & 6 & 6 & 6 & 7 & 7 & 7  \\
 \chi_{\mathcal{M}} = & + & - & + & - & - & - & + & - & - & - & + & + & - & - & + & + & - & +  \\
 \\
  & 2 & 1 & 1 & 2 & 1 & 2 & 3 & 1 & 1 & 2 & 1 & 2 & 3 & 1 & 2 & 3 & 4 & \\
 & 3 & 2 & 3 & 3 & 4 & 4 & 4 & 2 & 3 & 3 & 4 & 4 & 4 & 5 & 5 & 5 & 5 & \\
 & 4 & 5 & 5 & 5 & 5 & 5 & 5 & 6 & 6 & 6 & 6 & 6 & 6 & 6 & 6 & 6 & 6 & \\
 & 7 & 7 & 7 & 7 & 7 & 7 & 7 & 7 & 7 & 7 & 7 & 7 & 7 & 7 & 7 & 7 & 7 & \\
 \chi_{\mathcal{M}} = & + & + & - & - & + & + & + & + & - & - & + & - & + & + & + & - & +& \\
\end{array}
$$
the line $L$ going through $1$ and $5$ is a complete Kneser transversal line. Indeed, by Proposition~\ref{prop1}, we know that $L$ intersects the triangles $(2,3,4)$, $(2,3,6)$, $(2,6,7)$ and $(3,4,7)$ since the corresponding circuits are
$$
\begin{array}{cccc}
\{\bar{1}2345\}, & \{1\bar{2}\bar{3}5\bar{6}\}, & \{1\bar{2}5\bar{6}\bar{7}\}, & \{1\bar{3}\bar{4}\bar{5}\bar{7}\},\\
\end{array}
$$
implying  that $L$ intersects the $5$ tetrahedra $(2,3,4,6)$, $(2,3,4,7)$, $(2,3,6,7)$, $(2,4,6,7)$ and $(3,4,6,7)$.
\medskip

By applying the above method, we obtain the following result.

\begin{theorem}\label{th:comp1}
Among the $246$ configurations of $7$ points in general position in $\R^3$ there are $124$ admitting a complete Kneser transversal to the tetrahedra. These configurations correspond to the $124$ realizable rank $4$ oriented matroids on $7$ elements given by the following set according to the classification in \cite{F}
{\scriptsize $$
A:=
\begin{array}[t]{l}
\left\{2,3,5,6,8,9,10,15,16,18,20,21,25,27,28,29,33,34,35,38,40,41,43,44,45,46,\right.\\
47,48,50,51,52,55,56,60,62,63,64,67,68,69,70,71,72,74,76,79,85,88,92,93,\\
94,95,96,97,98,99,100,101,102,106,107,109,110,111,112,113,118,120,123,124,\\
125,127,132,134,135,136,140,141,142,144,145,150,151,154,155,156,157,159,\\
160,166,167,171,172,177,178,182,183,184,185,186,187,189,191,192,195,199,\\
\left. 200,201,206,207,208,211,212,219,220,221,224,225,228,229,234,237,243,244\right\}.
\end{array}
$$}
\end{theorem}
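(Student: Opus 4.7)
My plan is to establish Theorem~\ref{th:comp1} by an exhaustive computer check over the $246$ realizable rank-$4$ order types on $7$ elements tabulated by Finschi~\cite{F}. Each such order type $\mathcal{M}$ is encoded by a chirotope $\chi_{\mathcal{M}}\colon\binom{[7]}{4}\to\{-,+\}$, so I would simply loop, for every chirotope in the list, over the $\binom{7}{2}=21$ unordered pairs $\{i_1,i_2\}\subset[7]$ and decide whether the affine line through $x_{i_1}$ and $x_{i_2}$ meets all five tetrahedra spanned by the remaining points. The order type $\mathcal{M}$ is flagged as admitting a complete Kneser transversal as soon as one of these pair-lines succeeds, and the final output is the list of flagged indices.

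The decision procedure for a fixed pair $\{i_1,i_2\}$ rests on Proposition~\ref{prop1}. The line $(x_{i_1},x_{i_2})$ pierces the relative interior of the triangle $(x_{i_3},x_{i_4},x_{i_5})$ if and only if, in the unique signed circuit of $\mathcal{M}$ supported on the $5$-set $\{i_1,\dots,i_5\}$, the signs of $x_{i_3},x_{i_4},x_{i_5}$ coincide; the circuit itself is read off $\chi_{\mathcal{M}}$ via the standard formula $\operatorname{sg}(e_t)=(-1)^t\chi_{\mathcal{M}}(e_1,\dots,\widehat{e_t},\dots,e_5)$, so the $\binom{5}{3}=10$ triangle tests per pair reduce to elementary sign look-ups. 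To turn this triangle information into tetrahedron information I would use the fact, already exploited in the text and in \cite[Proposition~2.1]{CMMMR}, that under general position a line meeting the interior of a tetrahedron meets exactly two of its four triangular faces. Consequently a tetrahedron on $[7]\setminus\{i_1,i_2\}$ is pierced by $(x_{i_1},x_{i_2})$ if and only if at least one of its four faces is pierced, and the line is a complete Kneser transversal precisely when every one of the five tetrahedra on $[7]\setminus\{i_1,i_2\}$ receives such a face.

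The main obstacle is not conceptual but of a bookkeeping nature: the $246$ realizable uniform chirotopes of rank $4$ on $7$ elements are indexed by a specific enumeration convention in \cite{F}, and care must be taken so that the identifier attached to each chirotope during the computation matches the one published in Finschi's catalogue, since the conclusion of the theorem is a concrete list of indices. Before running the full loop I would sanity-check the implementation against the worked example $OT(7,4,2)$ displayed just above the theorem, where the pair $\{1,5\}$ is shown by hand to yield a complete Kneser transversal; reproducing this single instance simultaneously stress-tests the chirotope encoding, the circuit-extraction formula, and the triangle-to-tetrahedron aggregation. Once this consistency is confirmed, the full enumeration is expected to return exactly the $124$ order types listed in the set $A$, proving the theorem.
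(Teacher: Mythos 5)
Your proposal follows essentially the same route as the paper: for each of the $21$ pairs, apply Proposition~\ref{prop1} to test the $10$ triangles via the signed circuits read off the chirotope, pass from triangles to the $5$ tetrahedra using the two-faces observation from \cite[Proposition 2.1]{CMMMR}, and run this over all $246$ order types in Finschi's catalogue. The only additions are implementation details (the explicit circuit-extraction formula and the sanity check against $OT(7,4,2)$) that do not change the argument.
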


%%%%%%%%%%%%%%%%%%%%%%%%%%%%%%%%%%%%%%%%%%%%%%%%%%%%%%%%%%%%
\subsection{Kneser transversal line}
%%%%%%%%%%%%%%%%%%%%%%%%%%%%%%%%%%%%%%%%%%%%%%%%%%%%%%%%%%%%

Let $E:=\{x_1,\ldots,x_7\}$ be a set of 7 points in general position in $\R^3$ and let $\mathcal{M}=(E,\mathcal{B})$ be its associated oriented matroid. By Theorem \ref{th:kt}, if there exist a non-complete Kneser transversal line to the convex hull of its $4$-subsets, then the $7$ points of $\mathcal{M}$ must look as depicted in Figure~\ref{fig2}. This implies that $\mathcal{M}$ admits the following circuits

\begin{eqnarray}\label{eqa}
\{\overline{x_1}\, \overline{x_2}\, x_3\, x_4\, x_7\},
&
\{\overline{x_1}\, \overline{x_2}\, x_5\, x_6\, x_7\},
&
\{\overline{x_3}\, \overline{x_4}\, x_5\, x_6\, x_7\},
\end{eqnarray}

and cocircuits
\begin{eqnarray}\label{eqb}
\{\overline{x_3}\, x_4\, \overline{x_5}\, x_6\},
&
\{\overline{x_1}\, x_2\, x_5\, \overline{x_6}\},
&
\{\overline{x_1}\, x_2\, \overline{x_3}\, x_4\}.
\end{eqnarray}

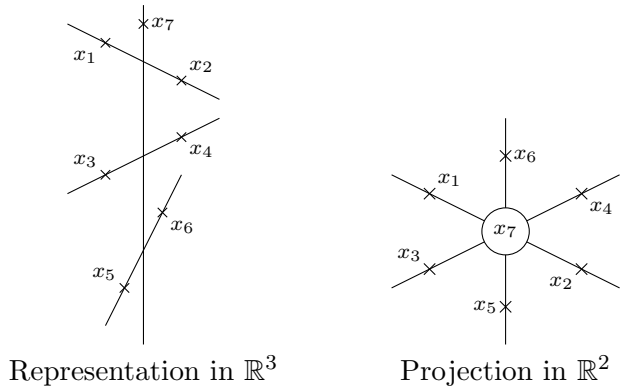
\begin{figure}
\begin{center}
\begin{tabular}{ccc}
\begin{tikzpicture}[scale=0.25]
\scriptsize
\draw (0,0) -- (0,18);
\draw (-2,1) -- (2,9);
\draw (-4,8) -- (4,12);
\draw (-4,17) -- (4,13);

\draw (-2.25,16.25) -- (-1.75,15.75);
\draw (-1.75,16.25) -- (-2.25,15.75);
\node[below left] (1) at (-2,16) {$x_1$};

\draw (1.75,14.25) -- (2.25,13.75);
\draw (2.25,14.25) -- (1.75,13.75);
\node[above right] (2) at (2,14) {$x_2$};

\draw (-2.25,9.25) -- (-1.75,8.75);
\draw (-1.75,9.25) -- (-2.25,8.75);
\node[above left] (3) at (-2,9) {$x_3$};

\draw (1.75,11.25) -- (2.25,10.75);
\draw (2.25,11.25) -- (1.75,10.75);
\node[below right] (4) at (2,11) {$x_4$};

\draw (-1.25,3.25) -- (-0.75,2.75);
\draw (-0.75,3.25) -- (-1.25,2.75);
\node[above left] (5) at (-1,3) {$x_5$};

\draw (0.75,7.25) -- (1.25,6.75);
\draw (1.25,7.25) -- (0.75,6.75);
\node[below right] (6) at (1,7) {$x_6$};

\draw (-0.25,17.25) -- (0.25,16.75);
\draw (0.25,17.25) -- (-0.25,16.75);
\node[right] (7) at (0,17) {$x_7$};
\end{tikzpicture}
& \quad\quad\quad &
\begin{tikzpicture}[scale=0.5]
\scriptsize
\draw (0,0) -- (0,6);
\draw (-3,1.5) -- (3,4.5);
\draw (3,1.5) -- (-3,4.5);

\draw (-2.15,4.15) -- (-1.85,3.85);
\draw (-1.85,4.15) -- (-2.15,3.85);
\node[above right] (1) at (-2,4) {$x_1$};

\draw (1.85,2.15) -- (2.15,1.85);
\draw (2.15,2.15) -- (1.85,1.85);
\node[below left] (2) at (2,2) {$x_2$};

\draw (-2.15,2.15) -- (-1.85,1.85);
\draw (-1.85,2.15) -- (-2.15,1.85);
\node[above left] (3) at (-2,2) {$x_3$};

\draw (1.85,4.15) -- (2.15,3.85);
\draw (2.15,4.15) -- (1.85,3.85);
\node[below right] (4) at (2,4) {$x_4$};

\draw (-0.15,1.15) -- (0.15,0.85);
\draw (0.15,1.15) -- (-0.15,0.85);
\node[left] (5) at (0,1) {$x_5$};

\draw (-0.15,5.15) -- (0.15,4.85);
\draw (0.15,5.15) -- (-0.15,4.85);
\node[right] (6) at (0,5) {$x_6$};

\node[draw,circle,fill=white] (7) at (0,3) {$x_7$};
\end{tikzpicture} \\
Representation in $\R^3$ & & Projection in $\R^2$
\end{tabular}
\end{center}
\caption{$7$ points in $\R^3$ with circuits and cocircuits satisfying \eqref{eqa} and \eqref{eqb} with a Kneser transversal line to all $4$-sets}\label{fig2}
\end{figure}

However it is possible that certain configurations of $7$ points $\mathcal{M}$ having circuits as given in \eqref{eqa} and cocircuits as in \eqref{eqb} do not admit a transversal line to the convex hull of its $4$-subsets, see Figure~\ref{fig3} where this situation is illustrated. This example shows once more that the existence of Kneser Transversals is not an invariant of the order type in general.

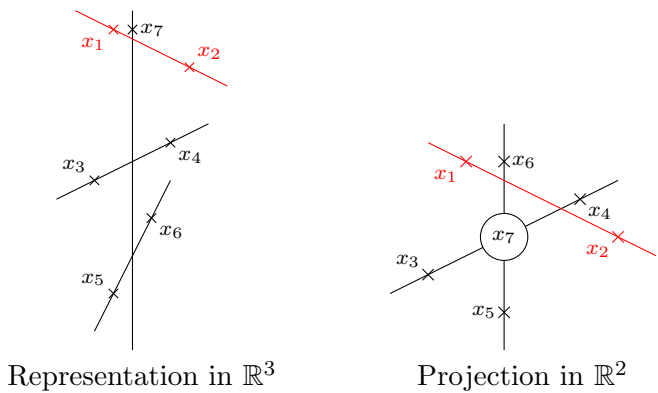
\begin{figure}
\begin{center}
\begin{tabular}{ccc}
\begin{tikzpicture}[scale=0.25]
\scriptsize
\draw (0,0) -- (0,18);
\draw (-2,1) -- (2,9);
\draw (-4,8) -- (4,12);
\draw[color=red] (-3,18) -- (5,14);

\draw[color=red] (-1.25,17.25) -- (-0.75,16.75);
\draw[color=red] (-0.75,17.25) -- (-1.25,16.75);
\node[color=red,below left] (1) at (-1,17) {$x_1$};

\draw[color=red] (2.75,15.25) -- (3.25,14.75);
\draw[color=red] (3.25,15.25) -- (2.75,14.75);
\node[color=red,above right] (2) at (3,15) {$x_2$};

\draw (-2.25,9.25) -- (-1.75,8.75);
\draw (-1.75,9.25) -- (-2.25,8.75);
\node[above left] (3) at (-2,9) {$x_3$};

\draw (1.75,11.25) -- (2.25,10.75);
\draw (2.25,11.25) -- (1.75,10.75);
\node[below right] (4) at (2,11) {$x_4$};

\draw (-1.25,3.25) -- (-0.75,2.75);
\draw (-0.75,3.25) -- (-1.25,2.75);
\node[above left] (5) at (-1,3) {$x_5$};

\draw (0.75,7.25) -- (1.25,6.75);
\draw (1.25,7.25) -- (0.75,6.75);
\node[below right] (6) at (1,7) {$x_6$};

\draw (-0.25,17.25) -- (0.25,16.75);
\draw (0.25,17.25) -- (-0.25,16.75);
\node[right] (7) at (0,17) {$x_7$};
\end{tikzpicture}
& \quad\quad\quad &
\begin{tikzpicture}[scale=0.5]
\scriptsize
\draw (0,0) -- (0,6);
\draw (-3,1.5) -- (3,4.5);
\draw[color=red] (4,2.5) -- (-2,5.5);

\draw[color=red] (-1.15,5.15) -- (-0.85,4.85);
\draw[color=red] (-0.85,5.15) -- (-1.15,4.85);
\node[color=red,below left] (1) at (-1,5) {$x_1$};

\draw[color=red] (2.85,3.15) -- (3.15,2.85);
\draw[color=red] (3.15,3.15) -- (2.85,2.85);
\node[color=red,below left] (2) at (3,3) {$x_2$};

\draw (-2.15,2.15) -- (-1.85,1.85);
\draw (-1.85,2.15) -- (-2.15,1.85);
\node[above left] (3) at (-2,2) {$x_3$};

\draw (1.85,4.15) -- (2.15,3.85);
\draw (2.15,4.15) -- (1.85,3.85);
\node[below right] (4) at (2,4) {$x_4$};

\draw (-0.15,1.15) -- (0.15,0.85);
\draw (0.15,1.15) -- (-0.15,0.85);
\node[left] (5) at (0,1) {$x_5$};

\draw (-0.15,5.15) -- (0.15,4.85);
\draw (0.15,5.15) -- (-0.15,4.85);
\node[right] (6) at (0,5) {$x_6$};

\node[draw,circle,fill=white] (7) at (0,3) {$x_7$};
\end{tikzpicture}\\
Representation in $\R^3$ & & Projection in $\R^2$
\end{tabular}
\end{center}
\caption{$7$ points in $\R^3$ with circuits and cocircuits satisfying \eqref{eqa} and \eqref{eqb} but without transversal line to all $4$-sets}\label{fig3}
\end{figure}

Nevertheless, we can still identify whether a configuration of 7 points admits a Kneser transversal line. To this end, we consider the oriented matroid $\mathcal{M}'$ associated to the configuration of  $8$ points $E':=\{x_1,\dots,x_8\}$ in $\R^3$, not necessarily in general position, illustrated in Figure~\ref{fig4}. The deletion of either point $x_7$ or point $x_8$ from $\mathcal{M}$ yields a configuration on $7$ points as represented in Figure~\ref{fig2} admitting thus a Kneser transversal line (containing either $x_7$ or $x_8$) to the all tetrahedra. We thus have that the line going through $x_7$ and $x_8$ would be a complete Kneser transversal line of $E'$. Moreover, any  configuration on $7$ points as represented in Figure~\ref{fig2} arises on this way.

We may thus detect all such configurations $\mathcal{M}'$. We do this by observing that an oriented matroid $\mathcal{M}$ on 8 elements correspond to such configuration if and only if $\mathcal{M}$ admits the following cocircuits

\begin{eqnarray}\label{eqc}
\{\overline{x_3}\, x_4\, \overline{x_5}\, x_6\},
&
\{\overline{x_1}\, x_2\, x_5\, \overline{x_6}\},
&
\{\overline{x_1}\, x_2\, \overline{x_3}\, x_4\}.
\end{eqnarray}

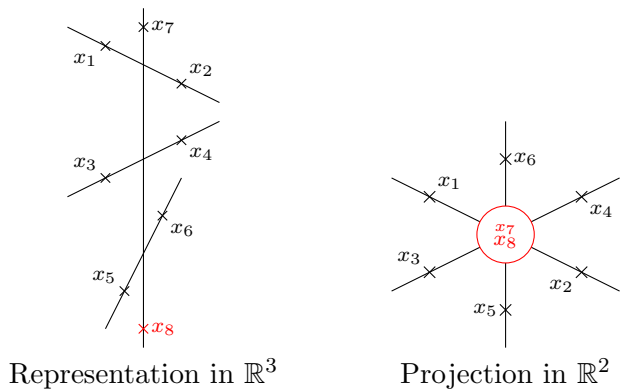
\begin{figure}
\begin{center}
\begin{tabular}{ccc}
\begin{tikzpicture}[scale=0.25]
\scriptsize
\draw (0,0) -- (0,18);
\draw (-2,1) -- (2,9);
\draw (-4,8) -- (4,12);
\draw (-4,17) -- (4,13);

\draw (-2.25,16.25) -- (-1.75,15.75);
\draw (-1.75,16.25) -- (-2.25,15.75);
\node[below left] (1) at (-2,16) {$x_1$};

\draw (1.75,14.25) -- (2.25,13.75);
\draw (2.25,14.25) -- (1.75,13.75);
\node[above right] (2) at (2,14) {$x_2$};

\draw (-2.25,9.25) -- (-1.75,8.75);
\draw (-1.75,9.25) -- (-2.25,8.75);
\node[above left] (3) at (-2,9) {$x_3$};

\draw (1.75,11.25) -- (2.25,10.75);
\draw (2.25,11.25) -- (1.75,10.75);
\node[below right] (4) at (2,11) {$x_4$};

\draw (-1.25,3.25) -- (-0.75,2.75);
\draw (-0.75,3.25) -- (-1.25,2.75);
\node[above left] (5) at (-1,3) {$x_5$};

\draw (0.75,7.25) -- (1.25,6.75);
\draw (1.25,7.25) -- (0.75,6.75);
\node[below right] (6) at (1,7) {$x_6$};

\draw (-0.25,17.25) -- (0.25,16.75);
\draw (0.25,17.25) -- (-0.25,16.75);
\node[right] (7) at (0,17) {$x_7$};

\draw[color=red] (-0.25,1.25) -- (0.25,0.75);
\draw[color=red] (0.25,1.25) -- (-0.25,0.75);
\node[color=red,right] (8) at (0,1) {$x_8$};
\end{tikzpicture}
& \quad\quad\quad &
\begin{tikzpicture}[scale=0.5]
\scriptsize
\draw (0,0) -- (0,6);
\draw (-3,1.5) -- (3,4.5);
\draw (3,1.5) -- (-3,4.5);

\draw (-2.15,4.15) -- (-1.85,3.85);
\draw (-1.85,4.15) -- (-2.15,3.85);
\node[above right] (1) at (-2,4) {$x_1$};

\draw (1.85,2.15) -- (2.15,1.85);
\draw (2.15,2.15) -- (1.85,1.85);
\node[below left] (2) at (2,2) {$x_2$};

\draw (-2.15,2.15) -- (-1.85,1.85);
\draw (-1.85,2.15) -- (-2.15,1.85);
\node[above left] (3) at (-2,2) {$x_3$};

\draw (1.85,4.15) -- (2.15,3.85);
\draw (2.15,4.15) -- (1.85,3.85);
\node[below right] (4) at (2,4) {$x_4$};

\draw (-0.15,1.15) -- (0.15,0.85);
\draw (0.15,1.15) -- (-0.15,0.85);
\node[left] (5) at (0,1) {$x_5$};

\draw (-0.15,5.15) -- (0.15,4.85);
\draw (0.15,5.15) -- (-0.15,4.85);
\node[right] (6) at (0,5) {$x_6$};

\node[color=red,draw,circle,fill=white] (7) at (0,3) {$\stackrel{x_7}{x_8}$};
\end{tikzpicture} \\
Representation in $\R^3$ & & Projection in $\R^2$
\end{tabular}
\end{center}
\caption{$8$ points in $\R^3$ with a complete transversal line to all $4$-sets}\label{fig4}
\end{figure}

For each of the $10775236$ order types of $8$ points in $\R^3$, we consider its associated oriented matroid $\mathcal{M}$. If $\mathcal{M}$ admits cocircuits of the form \eqref{eqc}, we delete $x_7$ or $x_8$ obtaining a configuration of $7$ points in general position as in Figure~\ref{fig2} admitting a non-complete Kneser transversal line to all tetrehedra.

\begin{theorem}\label{th:comp2}
Among the $246$ different order types of $7$ points in general position in $\R^3$ there are $124$ admitting a representation for which there is a non-complete Kneser transversal line to all tetrahedra. These configurations correspond to the $124$ realizable rank $4$ oriented matroids on $7$ elements given in by the following set according to the classification in \cite{F}
{\scriptsize $$
B :=
\begin{array}[t]{l}
\left\{1,2,4,6,7,10,11,12,13,14,16,19,24,26,29,30,31,32,36,38,39,40,41,42,55,58,\right.\\
59,60,61,62,63,65,70,71,72,74,76,77,78,79,80,81,82,83,85,86,87,88,89,90,\\
91,93,96,97,98,99,101,102,103,104,105,111,114,117,121,122,124,126,128,129,\\
130,138,139,140,145,146,147,148,149,151,152,153,158,165,170,171,172,173,\\
174,175,176,177,180,185,194,196,197,198,199,201,204,206,207,209,211,212,\\
\left. 213,214,215,217,218,219,220,230,235,236,237,238,239,240,241,242,244,246\right\}.
\end{array}
$$}
\end{theorem}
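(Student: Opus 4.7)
The plan is to reduce the detection of non-complete Kneser transversal lines for $7$-point configurations to the detection of complete Kneser transversal lines for suitably extended $8$-point configurations, where the existence of such a transversal becomes an intrinsic oriented-matroid property. First, I would specialize Theorem~\ref{th:kt} to the setting $d=3$, $\lambda=2$, $k=4$, $n=7$: any non-complete Kneser transversal line $L$ to the tetrahedra of a set $X$ of $7$ points in general position in $\R^3$ must contain exactly $d-2(\lambda-1)=1$ point of $X$, and the remaining six points must be partitioned into three pairs whose closed segments are crossed by $L$. After relabeling so that $x_7\in L$ and the pairs are $\{x_1,x_2\},\{x_3,x_4\},\{x_5,x_6\}$, this combinatorial picture is exactly the one of Figure~\ref{fig2} and forces the three circuits \eqref{eqa} and three cocircuits \eqref{eqb} in the associated oriented matroid $\mathcal{M}$.

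Next, since Figure~\ref{fig3} shows that these circuits and cocircuits can be present in $\mathcal{M}$ without the actual existence of a line $L$, one cannot decide the question from $\mathcal{M}$ alone. The idea is to add an auxiliary point $x_8$ lying on the hypothetical line $L$, as in Figure~\ref{fig4}; in the resulting $8$-point configuration the line through $x_7$ and $x_8$ is, by construction, a complete Kneser transversal to every tetrahedron. Conversely, given any realizable rank-$4$ oriented matroid $\mathcal{M}'$ on $8$ elements whose cocircuits include \eqref{eqc}, deleting either $x_7$ or $x_8$ recovers a $7$-point configuration in general position of the form shown in Figure~\ref{fig2}, and the image of the line through the two distinguished elements is a genuine non-complete Kneser transversal of the $7$ remaining points. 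Thus the set $B$ of $7$-point order types admitting such a transversal is precisely the set of order types obtained by deleting one of the two collinear distinguished elements from a rank-$4$ oriented matroid on $8$ elements satisfying \eqref{eqc}.

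The remaining work is a finite enumeration. Using Finschi's database~\cite{F} of the $10{,}775{,}236$ rank-$4$ order types on $8$ elements, I would scan each one and, for every labeling of its ground set as $\{x_1,\dots,x_8\}$ (up to the obvious symmetries of the cocircuit pattern \eqref{eqc}, namely swapping the two points of any pair and permuting the three pairs, as well as swapping $x_7$ with $x_8$), test whether the three cocircuits \eqref{eqc} all appear. For each positive hit I would form the deletion minor on the first seven elements, identify its isomorphism class among the $246$ realizable order types on $7$ elements in~\cite{F}, and record the index. Collecting these indices yields the list $B$ of $124$ order types stated in the theorem. The principal obstacle is therefore computational rather than conceptual: a naive search over $8!\cdot 10^7$ labelings would be infeasible, and one must exploit the symmetry group of \eqref{eqc} together with the precomputed cocircuit lists in~\cite{F} in order to process each matroid in essentially constant time; a secondary point that needs checking is that, in any realization of $\mathcal{M}'$, the generic position of the six remaining points guarantees that the transversal line through $x_7,x_8$ descends to a bona fide non-complete Kneser transversal (rather than passing through a third point of $X$), which is ensured by the general position required in the theorem statement.
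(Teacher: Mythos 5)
Your proposal is correct and follows essentially the same route as the paper: it specializes Theorem~\ref{th:kt} to force the circuit/cocircuit pattern of Figure~\ref{fig2}, observes (via Figure~\ref{fig3}) that this pattern alone does not decide existence, and therefore passes to the $8$-point extension with an auxiliary point on the candidate line so that the cocircuits \eqref{eqc} become an intrinsic oriented-matroid condition, finally enumerating the $10775236$ rank-$4$ order types on $8$ elements and deleting $x_7$ or $x_8$ to recover the list $B$. The only additions beyond the paper's argument are implementation details (exploiting the symmetry group of \eqref{eqc}, checking that the line does not pass through a third point), which are sound.
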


\begin{corollary}
Among the $246$ configurations of $7$ points in general position in $\R^3$, $124$ of them admit a Kneser transversal line to all the tetrahedra
and $122$ configurations do not admit a Kneser transversal line to the all the tetrahedra.
\end{corollary}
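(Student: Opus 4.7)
The plan is to derive the Corollary from the structural dichotomy of Theorem~\ref{th:kt} together with the two computational enumerations in Theorems~\ref{th:comp1} and \ref{th:comp2}. Specializing Theorem~\ref{th:kt} to $d=3$, $\lambda=2$, $k=4$, so $n=d+2(k-\lambda)=7$, every Kneser transversal line $L$ to the convex hulls of the $4$-sets (tetrahedra) of a $7$-point general-position configuration in $\R^3$ is either (i) a complete Kneser transversal, passing through $d-\lambda+1=2$ of the points of $X$, or (ii) a non-complete Kneser transversal, passing through exactly $d-2(\lambda-1)=1$ point and meeting three closed segments that pair up the remaining six points. Consequently, an order type of $7$ points in general position in $\R^3$ admits a Kneser transversal line (for some realization) if and only if it admits a complete one or admits, in some representation, a non-complete one.

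Let $A$ denote the $124$-element list of Theorem~\ref{th:comp1}. Since admitting a complete Kneser transversal is an invariant of the order type (it can be detected purely from the Radon partitions of the associated oriented matroid, as recalled from \cite{CMMMR} in the introduction), $A$ records precisely the order types that admit a complete Kneser line, irrespective of the chosen realization. Let $B$ denote the $124$-element list of Theorem~\ref{th:comp2}, namely the order types admitting some representation with a non-complete Kneser line (detected there via the $8$-point extended oriented matroid $\mathcal{M}'$ satisfying the cocircuits in \eqref{eqc}). By the dichotomy above, the order types of $7$ points in general position in $\R^3$ that admit a Kneser transversal line are exactly the elements of $A \cup B$, and those that do not are its complement in the $246$-element class of realizable rank-$4$ oriented matroids.

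The remaining step is the explicit set-theoretic comparison of the two enumerated lists $A$ and $B$. This is a finite, purely combinatorial indexed lookup using the classification of \cite{F}, with no further geometric or matroidal input: for each of the $246$ abstract realizable order types one tests membership in $A$ and in $B$. This is the main logistical obstacle and can be carried out by the same computer routines used to produce the two lists in the first place. The count of $|A \cup B|$ and of its complement in the $246$ order types then yields the announced partition into those admitting and those not admitting a Kneser transversal line to all the tetrahedra, which is the statement of the Corollary.
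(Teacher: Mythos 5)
Your reduction to the lists $A$ and $B$ is the right starting point, but the final counting step rests on a misreading of what the corollary asserts, and if carried out it would produce the wrong numbers. You classify an order type as ``admitting a Kneser transversal line'' whenever it lies in $A\cup B$, i.e.\ whenever \emph{some} realization has a (complete or non-complete) Kneser line. The actual intersection data is $|A\cap B|=46$, so $|A\cup B|=124+124-46=202$ and $|\overline{A\cup B}|=44$; your partition would therefore read $202$ versus $44$, not $124$ versus $122$. Since the lists in Theorems~\ref{th:comp1} and~\ref{th:comp2} are visibly different (for instance $1\in B\setminus A$ and $3\in A\setminus B$), one cannot have $A=B$, so there is no way your dichotomy yields $124$.

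The missing ingredient is Theorem~\ref{stability}. The partition intended by the corollary is between order types for which \emph{every} realization admits a Kneser line and those for which \emph{some} realization admits none. The $124$ order types in $A$ always admit one, because a complete Kneser transversal is detected by Radon partitions alone and is therefore an invariant of the order type. The $44$ order types in $\overline{A\cup B}$ never admit one. For the remaining $78$ order types in $B\setminus A$, a realization may carry a non-complete Kneser line but never a complete one; Theorem~\ref{stability} (with $d=3$, $k=4$, $\lambda=2$, $n=7$) then yields an arbitrarily small perturbation in which every Kneser transversal would have to be complete, hence a realization with no Kneser transversal at all. This places all $44+78=122$ of these order types in the ``does not admit'' class and leaves exactly $|A|=124$ in the other, which is the statement. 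Without invoking stability you cannot move the $78$ order types of $B\setminus A$ into the second class, and your count collapses to the incorrect $202$ versus $44$.
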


\begin{proof} By Theorems \ref{th:comp1} and \ref{th:comp2}, we have
$$
|A|=124, |B|=124, |A\cap B|=46, |A\setminus B|=78, |B\setminus A|=78, |\overline{A\cup B}|=44.
$$
The $44$ order types of $\overline{A\cup B}$ do not admit Kneser transversal lines. By Theorem \ref{stability}, for each of the $78$ order types of $B\setminus A$, there exists a representation for which there is no Kneser transversal line.
\end{proof}

It turns out that all abstract order types of rank 4 with at most 7 elements are realizable \cite[Corollary 8.3.3]{Bjo-99}, that is, they correspond to 7 points in the space, but not necessarily in general position.

\begin{theorem}
Among the $5083$ abstract order types of rank $r=4$ ($d=3$) with $n=7$ there are $1158$ having a complete Kneser transversal line to all the tetrahedra.
\end{theorem}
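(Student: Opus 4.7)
The plan is to observe that the method developed for Theorem~\ref{th:comp1} generalizes verbatim from realizable order types in general position to arbitrary abstract order types of rank $4$ on $7$ elements. The key point is that Proposition~\ref{prop1} characterizes the intersection of a line $(x_{i_1},x_{i_2})$ with the triangle $(x_{i_3},x_{i_4},x_{i_5})$ purely in terms of the signs of the circuit $\{x_{i_1},x_{i_2},x_{i_3},x_{i_4},x_{i_5}\}$ of the associated oriented matroid. Since circuits and their signs are intrinsic to any (not necessarily realizable) oriented matroid, the same incidence test can be applied to any abstract order type.

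First, I would fix the following algorithm, which is exactly the one used in the proof of Theorem~\ref{th:comp1}: given an abstract order type $\mathcal{M}$ of rank $4$ on $E=\{x_1,\dots,x_7\}$, for each of the $\binom{7}{2}=21$ pairs $\{x_{i_1},x_{i_2}\}$, check via the circuit sign criterion of Proposition~\ref{prop1} whether the line $(x_{i_1},x_{i_2})$ "crosses" each of the $\binom{5}{3}=10$ triangles with vertices in $E\setminus\{x_{i_1},x_{i_2}\}$. As noted in the paragraph following Proposition~\ref{prop1}, transversality of $(x_{i_1},x_{i_2})$ to all triangles of $E\setminus\{x_{i_1},x_{i_2}\}$ forces transversality to all $\binom{5}{4}=5$ tetrahedra, so $(x_{i_1},x_{i_2})$ is a complete Kneser transversal line. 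Declare $\mathcal{M}$ to have a complete Kneser transversal if some pair passes the test.

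Next, I would feed into this procedure the full enumeration of the $5083$ abstract order types of rank $4$ on $7$ elements from the classification of \cite{F}, tallying the number of matroids for which the algorithm returns "yes". Since the criterion depends only on circuits, the procedure is a finite check on combinatorial data and requires no realization step; in particular it extends Theorem~\ref{th:comp1} from the $246$ uniform (general position) order types to the full list. The expected output is $1158$.

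The only real obstacle is bookkeeping: one must handle the non-uniform oriented matroids correctly, since in those cases some "circuits" of $5$ elements may in fact reduce to circuits of smaller support (collinearities, coplanarities), and the statement of Proposition~\ref{prop1} must be read with the convention that a zero sign in $sg(x_{i_3}),sg(x_{i_4}),sg(x_{i_5})$ corresponds to the transversal line passing through a vertex or edge of the triangle, which still counts as a transversal intersection. Once this convention is fixed consistently, the enumeration is routine and the count $1158$ follows directly from running the test on the $5083$ matroids in \cite{F}.
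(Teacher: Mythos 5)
Your proposal matches the paper's own justification: the authors state that the count of $1158$ was obtained by applying the same Radon-partition/circuit test used for Theorem~\ref{th:comp1} to all $5083$ abstract order types, taking care of collinear and coplanar subsets via the circuits of the oriented matroid, which is exactly the algorithm and degenerate-case bookkeeping you describe. This is essentially the same approach, so no further comparison is needed.
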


This calculation was obtained by applying the same arguments as those used for Theorem \ref{th:comp1} (via Radon partitions) and by taking care of the cases when three or more points are collinear and when four of more points are coplanar (via the circuits of the the oriented matroid).
 
%%%%%%%%%%%%%%%%%%%%%%%%%%%%%%%%%%%%%%%%%%%%%%%%%%%%%%%%%%%%%%%%%%%%%%%%%%%%%%
\section*{Acknowledgments}
%%%%%%%%%%%%%%%%%%%%%%%%%%%%%%%%%%%%%%%%%%%%%%%%%%%%%%%%%%%%%%%%%%%%%%%%%%%%%%
The authors would like to thank the anonymous referees for their useful comments and remarks.

%%%%%%%%%%%%%%%%%%%%%%%%%%%%%%%%%%%%%%%%%%%%%%%%%%%%%%%%%%%%
%%%%%%%%%%%%%%%%%%%%%%%%%%%%%%%%%%%%%%%%%%%%%%%%%%%%%%%%%%%%

\end{document}